\newtheorem{theorem}{Theorem}[section]
\newtheorem{lemma}[theorem]{Lemma}
\newtheorem{proposition}[theorem]{Proposition}
\newtheorem{definition}[theorem]{Definition}
\newtheorem{remark}[theorem]{Remark}
\newtheorem{conjecture}[theorem]{Conjecture}
\newcommand{\C}{\mathbb C}
\newcommand{\N}{\mathcal N}
\newcommand{\R}{\mathbb R}
\newcommand{\0}{{\bf 0}}
\newcommand{\x}{{\bf x}}
\newcommand{\bv}{{\bf v}}
\begin{document}

\title[Bilipschitz geometry of real surface singularities  ]{ Bilipschitz geometry of real surface singularities whose tangent cone is a plane}

\author{Donal O'Shea}
%\ead{doshea@ncf.edu}

\author{Leslie Wilson}
%\ead{les@math.hawaii.edu}

%\date = 7.17.2023

%\address[ncf]{New College of Florida, 5800 Bayshore Road, Sarasota, FL 34243, USA}
%\email{doshea@ncf.edu}

%\address[uh]{Mathematics Department, University of Hawai`i at Manoa, 2565 McCarthy Mall, Honolulu, HI 96822}

\address{Donal O'Shea (doshea@ncf.edu): New College of Florida, 5800 Bayshore Road, FL 34243, USA}

\address{Leslie Wilson (les@math.hawaii.edu): Mathematics Department, University of Hawai`i at Manoa, 2565 McCarthy Mall, Honolulu, HI 96822}

%\begin{keyword}
%Lipschitz Geometry \sep conic and non-conic singularities.
%\MSC[2010]  14B05 \sep 14J17 \sep 14P10 \sep51F99
%\end{keyword}

\subjclass[2010]{14B05, 14J17, 14P10, 51F99}

\begin{abstract}
Tangent cones are preserved under ambient bilipschitz equivalence, but the behavior of the Nash cone is more delicate.  This paper explores the behavior of the Nash cone and of exceptional rays under ambient bilipschitz equivalence for real surfaces in $R^3$ with isolated singularity and whose tangent cone is a plane.  
\end{abstract}

\maketitle

\section{Introduction}

Over the last two decades, bilipschitz geometry has proved to be particularly well adapted to the exploration of algebraic and semialgebraic singularities.  It gives rise to equivalence relations that 
distinguish some geometrically interesting features of singularities that analytic and topological equivalence obscure. 

Three distinct bilipschitz categories are of particular interest: inner, outer and ambient  bilipschitz equivalence of two subspaces of Euclidean space.  The first two arise from homeomorphisms between the subspaces that preserve the intrinsic distance function of the subspaces and the Euclidean distance function restricted to the subspaces, and the third from a bilipschitz homeomorphism between the ambient Euclidean spaces that preserves the two subspaces. We review the definitions in more detail in Section 3. The inner case is relatively simple; see \cite{B}. The outer case is much more difficult.  The difference between the outer and ambient cases is explored in \cite{BG1}.

In \cite{BFGG}, the bilipschitz contact class of a real-valued function germ $f$ on $\R^2,0$ is studied by dividing the domain into finitely many simple pieces with the property that the order of the restriction of $f$ to the germs of arcs at $0$ on each piece is affinely related to the ``width" of the set of arcs for which $f$ has that order.  The set of germs of arcs at a point in a space was studied by Valette \cite{V}. It often is useful to study ``zones" which are ``connected" subsets of this ``Valette link" that need not be the set of all arcs in a subset of our domain. These methods are used to give a classification of outer bilipschitz equivalence of surfaces using tools developed by Birbrair, Gabrielov and others (see \cite{BFGG},   \cite{BG2}, \cite{BG3}, \cite{BG4}, \cite{GS}). In \S 5, we reprise briefly the techniques we need for our results.  

In \cite{S}, Sampaio shows that two semialgebraic sets that are outer bilipschitz homeomorphic have outer bilipschitz homeomorphic Zariski tangent cones.  
The Nash cone of a semialgebraic set captures much more of the finer geometric structure of a semialgebraic set in the neighborhood of a singularity than the Zariski tangent cone, and it is natural to ask whether the analogue of Sampaio's result holds for the Nash cone of a semialgebraic set.  In \cite{OW1}, we showed that this was not the case, but were unable to characterize the behavior of the Nash cone in some natural cases, such as when the Zariski tangent cone is a plane. If $X$ is a semialgebraic surface germ at $0$ in $\R^3 \equiv \R^2 \times \R$ which is the graph of a continuous function germ on $\R^2$ at $0$ and whose tangent cone at $0$ is $\R^2$, we conjectured (Conjecture 4.3 of \cite{OW1}) that $X$ is ambient bilipschitz homeomorphic to $\R^2$ iff the inner and outer metrics on $X$ are equivalent (that is, if an only if $X$ is Lipschitz normally embedded). In this paper, we use versions of the tools mentioned in the previous paragraph to prove our main theorem, Theorem 6.6, which establishes our conjecture under a mild additional hypothesis on $X$.  In Section 2 we go into more detail on tangent and Nash cones, and our results from \cite{OW}.   In Section 6 we prove the main theorem modulo a result on closedness of certain zones.  The arguments through this point are very elementary.  In section 8, we require a deep result, the preparation theorem for subanalytic sets, to prove the closedness result.

Although it seems plausible that our main theorem might be derived from the outer lipschitz classification mentioned above, we have not seen how to do it.  While our argument uses some very similar tools, it is rather different.  Instead of zones and order relationships for lipschitz functions on $X$ (which will often be distance functions between different parts of $X$), we use zones and order relations defined from certain slopes of $X$ over arcs in $\R^2$. In the cases of most interest these slopes may approach or even equal infinity along a whole arc, so they are not lipschitz functions and, in fact, are not continuous at $0$.   These slopes determine the limit of the tangent planes to $X$ as we approach $0$ on the arc, and thereby determine the Nash cone. 

In Section 7, we use our techniques to analyze three examples.  While direct, the computations are subtle, and we think that results will be useful for readers. We were certainly surprised at the rich structure that even simple equations could exhibit.  

Our main results all deal with local behavior near 0. They are often stated in the semialgebraic category, but should be valid for the subanalytic case.

\section{Zariski and Nash cones}

Let $V\subset\R^3$ be a semialgebraic surface containing the origin ${\bf 0}\subset\R^3$. Two natural semialgebraic sets, the (Zariski) tangent cone and the Nash cone, reflect the local geometry of $V$ at $\0$.  The tangent cone, denoted $C \equiv CV \equiv C^+(V, {\bf 0})$, is the set of tangent vectors: that is, $\bv \in C$ if and only it there exist $\x_n \in V - \{\0\}, \x_n \rightarrow \0$ and a sequence of positive real numbers $t_n > 0$ such that $t_n\x_n \rightarrow \bv$.  The Nash cone,  denoted $\N\equiv \N V \equiv \N(V, \0)$, is the set of 2-planes $T$ with the property that there exists a sequence $\{ \x_n\}$ of smooth points of $V$ (that is, points at which $V$ is locally a 2-dimensional $C^1$ manifold) converging to $\0$ such that $T$ is the limit of tangent spaces to $V$ at the points $\x_n$. 

It follows from results of Whitney \cite{W} that $\N(CV, \0) \subset \N (V, \0)$. However, this containment may be (and frequently is) proper, reflecting the fact that the Nash cone captures more of the local geometry of $V$ than the Zariski tangent cone.  In \cite{OW}, we establish a structure theorem for the Nash cone analogous to one proved by L{\^ e} and Teissier for complex analytic surfaces \cite{LT}.

To describe it, note that if $\{ \x_n \in V\}$ is a sequence of points on $V$ approaching the origin, we may, by passing to a subsequence assume that the sequence $\{\x_n\}$ approaches the origin tangent to some ray $\ell$.  Necessarily, $\ell \subset C$.  Let $\N_\ell(V, \0) \subset \N(V,\0)$ denote the space of limits of tangent spaces that can be obtained as limits of tangent spaces along sequences tending to the origin tangent to $\ell$.  (Whitney's arguments in \cite{W}  show that if $V$ is semialgebraic, then $T\in \N_\ell$ implies $\ell \subset T$.) The arguments of \cite{OW} show that if $V \subset \R^3$ is a reduced, semialgebraic surface with $\0 \in V$, then there exist finitely many rays 
$\ell_1, \ldots, \ell_r$ in $C$, called {\it exceptional rays}, with $\N_{\ell_i}$ connected, closed and one-dimensional.  For any other ray $\ell \in C - \{ \ell_1, \ldots, \ell_r\}$, $\N_\ell(V, \0)$ is a single point (that is a single plane), and   $\N_\ell(V, \0) = \N_\ell(C, \0)$.  

An exceptional ray $\ell$ is said to be {\it full} if  $\N_\ell$ consists of the full pencil of planes in $\R^3$ containing $\ell$.  In the case of complex analytic surfaces, all exceptional lines are full, so that knowledge of the tangent cone and exceptional rays completely characterizes the Nash cone. For real surfaces, the exceptional lines need not be full (for examples, see \cite{OW} and \cite{OW1}), so for each exceptional ray, one needs an additional parameter encoding the closed, connected, one-dimensional set $\N_\ell (V, \0 )$ in order to recover $\N$.  The structure theorem in \cite{OW} is stated for real algebraic surfaces in $\R^3$; that is for surfaces given implicitly by an equation 
$\{ f= 0\}$ where $f\in \R[x,y,z]$ is a polynomial vanishing at $\0$.  However, the techniques and results apply without change to semialgebraic surfaces; indeed, locally near a point (say $0$) they also apply to subanalytic surfaces.

\section{Bilipschitz geometry}

A map $h: V \rightarrow W$ between two metric spaces $(V, d_V)$ and $(W, d_W)$ is said to be {\it lipschitz} if $$d_W(h(x), h(y)) \le Kd_V(x, y)$$ for all $x,y \in V$ and some constant $K>0$, and {\it bilipschitz} if $h^{-1}$ exists and is lipschitz.  Equivalently, $h: V\rightarrow W$ is bilipschitz if and only if $h$ is surjective and there exists $K> 0$ such that 
$$\frac{1}{K} d_V(x, y) \le d_W(h(x), h(y)) \le Kd_V(x, y).$$
A semialgebraic set $V$, real or complex, embedded in $\R^n$ or $\C^n$  has two natural metrics. One, the {\it intrinsic} or {\it inner} metric is the metric on $V$ induced by defining the distance 
$d_i(x,y)$ between two points $x, y \in V$ to be the 
%change 1/21/17 minimum  can the minimum be realized?  no if V not closed, but if closed?
infimum of the lengths of piecewise analytic arcs on $V$ joining $x$ and $y$.  The {\it outer} metric on $V$ defines the distance between any two points $x$ and $y$ to be their Euclidean distance $d_o(x,y)=|x-y|$ in the ambient space in which $V$ is embedded..  
%change 2/9/17
Two such sets $V,W$ will be said to be inner (resp.\ outer) bilipschitz homeomorphic if they 
are bilipschitz homeomorphic with respect to the inner (resp.\ outer) metrics.  
If we don't say otherwise, we will mean outer.  In addition, if the outer bilipschitz 
homeomorphism is the restriction of a bilipschitz homeomorphism
 of the ambient Euclidean spaces 
 transforming $V$ into $W$, we will say they are ambient 
bilipschitz homeomorphic. 

A surface $V$  is said to be {\it normally embedded} (see \cite{BM}) if its outer and inner metrics are equivalent (that is, if there is a constant $K>0$ such that $d_i(x, y) \le Kd_o (x,y)$ for all $x,y\in V$.  In such a case, we say the $V$ is {\it length regular} or $\ell$-{\it regular}.  (In \cite{FW}, $\ell$-regularity is referred to as 1-regularity following \cite{T} (p. 79) where a hierarchy of regularity is defined.). Since outer bilipschitz equivalence implies inner bilipschitz equivalence, it is not hard to see that $\ell$-regularity is invariant under outer bilipschitz equivalence.

A homeomorphism is semialgebraic (subanalytic) if its graph is semialgebraic (subanalytic).
All our bilipschitz homeomorphisms are assumed to preserve the origin.

\section{The case when the tangent cone is a plane}

Let $\0 \in U \subset \R^2$ be semialgebraic (subanalytic) and open and $f:U \rightarrow \R$  be continuous and semialgebraic (subanalytic).  Suppose that the graph $X = \Gamma f$ is $C^2$-nonsingular except at $\0$ and that 
the tangent cone $C_\0 (X) = \R^2$.  The natural projection $\pi : X \rightarrow \R^2$ is singular for $x \in \Sigma (\pi)$ when $T_xX$ is vertical (that is, parallel to the $z$-axis) for $x\in \Sigma(\pi)$.

There are at most finitely many exceptional rays in $C_\0(X)$.  If none are full exceptional we know that
the germ at $X, \0$ is semialgebraic (subanalytic) bilipschitz to $\R^2, \0$. So, without loss of generality, we may assume that the positive $y$-axis is a full exceptional ray and we restrict $f$ to a wedge $\{ |x|\le ky, y\ge 0 \} $ intersected with $U$ that contains no other exceptional ray.  Call this $W$ and let
$X= \Gamma f|_W$.  %change brackets added

\section{Connected families of analytic arcs in $\R^2$}

We investigate $X$ above by studying the behavior of $f$ along analytic arcs (an analytic arc is a half-branch of an analytic curve $f(x,y)=0$) in the upper half-plane (usually restricted to $W$), parametrized by $ y\ge 0$.
Let $\mathcal{R}$ be the set of all such analytic arcs $\{x=\alpha(y), y\ge 0\}$; this is the Valette link of $0$ in the upper half-plane (or in $W$). Every element of $\mathcal{R}$ has a Puiseux expansion
$$\alpha(y) = c_1y^{p_1} + h.o.t.$$
where $p_1$ is a positive rational 
number, $c_1>0$, and $h.o.t$ denotes non-zero higher order rational 
terms.  
Define the {\it order} 
$\mathcal{O}(\alpha)$ of $\alpha$ to be the smallest nonzero power $p_1$ in its Puiseux expansion.

If $\alpha, \beta \in \mathcal{R}$, we write $\alpha < \beta$ if $\alpha(y) < \beta(y)$ for all sufficiently small nonzero $y$.  

A subset $A\subset \mathcal{R}$ is called {\it connected} if for all pairs of arcs $\alpha, \beta \in A$ with $\alpha < \beta$, and any 
$\gamma$ such that $\alpha < \gamma <\beta$, then $\gamma \in A$.  A connected subset of $\mathcal{R}$ is often called a {\it zone} in the literature.  

The {\it distance}, denoted $d(\alpha, \beta)$ between two arcs $\alpha(y), \beta(y) \in \mathcal{R}$ is defined to be the reciprocal of the order of their difference:
$$
 d(\alpha, \beta) = \frac{1}{\mathcal{O}(\alpha(y) - \beta(y))}.
 $$

The {\it width} $w(A)$ of a zone $A \subset \mathcal{R}$ is defined to be 
$$
w(A) = \sup\big\{d(\alpha, \beta) \ \ {\rm where}\ \  \alpha, \beta \in A \big\} .
$$  
%change

A zone $A$ is called {\it closed} if there exist $\alpha, \beta \in A$ such that 
$$ 
 w(A) =  d(\alpha, \beta).
 $$
 Otherwise, $A$ is called {\it open}.  
 %change

 \vskip .25in
 
\noindent{\bf Example.}  Let $$A = \{ x = y + cy^2 + {\rm all\  }h.o.t., \ \ 0 \le c < 1\}$$ and 
 $$B = \{x = y + y^2 + {\rm all\  }h.o.t. \}.$$
 Then $w(A)= w(B) = \frac{1}{2}$.  Note that $A$ is closed (let $\alpha(y) = y + \frac{1}{2}y^2 + h.o.t.$ and $\beta(y) = y + \frac{1}{3}y^2 + h.o.t.$), while $B$ is not.  To see the latter,
 note that if
 $ \alpha, \beta \in B$, then $\mathcal{O}(\alpha(y) - \beta(y)) > 2,$
 and we can choose $\alpha, \beta$ making $\mathcal{O}(\alpha(y) - \beta(y))$ arbitrarily close to 2, so that $w(B) = \frac{1}{2}$.
 Let 
 $$C = \{x = y + y^2 + cy^r +{\rm all\  }h.o.t. , \ \ {\rm for\  all\  } r>2, c>0 \}, $$
 which consists of all members of $B$ lying strictly below (as germs) the arc $x = y + y^2 $.
 Then $w(C)=2$ is $C$ is open by the same argument as for $B$.
 \vskip .25in
 
 \begin{lemma}  Suppose that $A$ is a zone (hence, connected) with $w(A) = w$.  Then there is a unique finite Puiseux series 
 $$x = p(y) = a_1y^{r_1} + \ldots + a_ry^{r_n}$$ with $r_i$ positive rational numbers $r_1 < \ldots < r_n < \frac{1}{w}$ such that
 each $\alpha \in A$ has the form
 $$\alpha = p(y) + h.o.t._{\ge \frac{1}{w}} $$
 ($h.o.t._{\ge \frac{1}{w}}$ denotes a sum of non-zero terms of order greater than or equal to $\frac{1}{w}$). 
\end{lemma}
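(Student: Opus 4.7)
My plan is to extract the common initial Puiseux data of all arcs in $A$ by truncating at the critical order $1/w$. I would first fix any reference arc $\alpha_0 \in A$ and write its Puiseux expansion $\alpha_0(y) = \sum_i b_i y^{s_i}$ with $0 < s_1 < s_2 < \ldots$ sharing a common denominator $N$. Since $\{s \in \tfrac{1}{N}\Z_{>0} : s < 1/w\}$ is a finite set, the partial sum $p(y) := \sum_{s_i < 1/w} b_i y^{s_i}$ is a finite Puiseux polynomial $a_1 y^{r_1} + \ldots + a_n y^{r_n}$ with rational exponents $r_1 < \ldots < r_n < 1/w$, exactly of the required shape.

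Next, I would verify the claimed decomposition for every $\alpha \in A$. The width hypothesis gives $d(\alpha_0, \alpha) \le w(A) = w$, hence $\mathcal{O}(\alpha_0 - \alpha) \ge 1/w$. Rewriting both expansions over the lcm of their denominators, every coefficient of $y^s$ with $s < 1/w$ in $\alpha_0 - \alpha$ must vanish, so the Puiseux coefficients of $\alpha_0$ and $\alpha$ agree at every rational exponent below $1/w$. Then $\alpha - p(y) = (\alpha - \alpha_0) + (\alpha_0 - p(y))$ is a sum of two Puiseux series each of order $\ge 1/w$, giving $\alpha = p(y) + h.o.t._{\ge 1/w}$ as required. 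Uniqueness then drops out: if $\tilde p$ were a second such finite Puiseux polynomial with exponents strictly less than $1/w$, then for any fixed $\alpha \in A$ we would have $p - \tilde p = (\alpha - \tilde p) - (\alpha - p)$ of order $\ge 1/w$, but every nonzero term of $p - \tilde p$ has exponent $< 1/w$, forcing $p = \tilde p$.

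I do not expect any serious obstacle; this is essentially a bookkeeping argument. The only point that calls for minor care is that different arcs in $A$ can have Puiseux expansions with different common denominators, so one must verify that $\mathcal{O}(\alpha_0 - \alpha) \ge 1/w$ truly forces coefficient-by-coefficient agreement at every rational exponent $s < 1/w$, not merely at exponents in $\tfrac{1}{N}\Z$. Passing to the lcm of the two denominators handles this cleanly. It is worth noting that connectedness of $A$ is never invoked---the argument uses only the definition of $w(A)$ as the supremum of distances between pairs in $A$.
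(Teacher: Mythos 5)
Your proposal is correct and takes essentially the same approach as the paper: both arguments hinge on the observation that if two arcs in $A$ had different truncations below order $1/w$, their distance would exceed $w$, contradicting $w(A)=w$; you simply phrase it constructively (truncate a reference arc, then verify all others agree) rather than as a single contradiction. Your remark that connectedness is never invoked is also correct and is implicit in the paper's proof as well.
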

\begin{proof}  Suppose $\alpha, \beta \in A$ with $\alpha = p(y) + h.o.t_{\ge \frac{1}{w} }$ and
$\beta= q(y) + h.o.t_{\ge \frac{1}{w} }$ with $p(y) \ne q(y)$.  Then $p(y) - q(y)$ has a first term with nonzero coefficient and order less than $w$ which implies $w(A) > w$, a contradiction. 

\end{proof}

\begin{lemma}  If $A$ is a closed zone with $w(A) = w$, there exists a finite Puiseux series
$$x = p(y) = a_1y^{r_1} + \ldots + a_ry^{r_n}$$
 with $r_i$ positive rational numbers $r_1 < \ldots < r_n < \frac{1}{w}$,  real 
 numbers $c_1 < c_2$, and two Puiseux series
$$\alpha_1 = p(y) + c_1y^{\frac{1}{w}} + h.o.t._{> \frac{1}{w}} \in A$$
$$\alpha_2 = p(y) + c_2y^{\frac{1}{w}} + h.o.t._{> \frac{1}{w}}  \in A.$$
Moreover, for all $c$ such that $c_1 < c < c_2$, all Puiseux series
$$\alpha = p(y) + cy^{\frac{1}{w}} + h.o.t._{> \frac{1}{w}} $$
lie in $A$.
\end{lemma}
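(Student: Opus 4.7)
The plan is to derive everything from Lemma 5.1 together with closedness; no new machinery is needed.

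First, I would invoke Lemma 5.1 to extract the common initial Puiseux polynomial. Since $A$ is a zone with $w(A)=w$, Lemma 5.1 produces the unique finite Puiseux series
$$p(y) = a_1 y^{r_1} + \cdots + a_n y^{r_n}, \qquad r_1 < \cdots < r_n < \tfrac{1}{w},$$
such that every $\alpha \in A$ has the form $\alpha = p(y) + h.o.t._{\ge 1/w}$. This is the $p(y)$ claimed in the statement, and its exponents and coefficients are already forced.

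Next, I would use closedness to produce the two distinguished arcs. By definition, there exist $\alpha_1, \alpha_2 \in A$ with $d(\alpha_1,\alpha_2)=w$, i.e. $\mathcal{O}(\alpha_1 - \alpha_2) = \tfrac{1}{w}$. Writing each arc in the form guaranteed by Lemma 5.1,
$$\alpha_i = p(y) + b_i\, y^{1/w} + h.o.t._{>1/w} \quad (i=1,2),$$
the difference $\alpha_1 - \alpha_2 = (b_1 - b_2) y^{1/w} + h.o.t._{>1/w}$ has order exactly $1/w$ precisely when $b_1 \ne b_2$. Relabeling if necessary, set $c_1 := \min(b_1,b_2)$ and $c_2 := \max(b_1,b_2)$, and rename the corresponding arcs $\alpha_1, \alpha_2$. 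This gives the two arcs of the desired form with $c_1 < c_2$.

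Finally, for the ``moreover'' clause I would argue by connectedness of $A$. Let $c$ with $c_1 < c < c_2$ and let
$$\alpha = p(y) + c\, y^{1/w} + h.o.t._{>1/w}$$
be any Puiseux arc of this form. Then
$$\alpha - \alpha_1 = (c - c_1) y^{1/w} + h.o.t._{>1/w}, \qquad \alpha_2 - \alpha = (c_2 - c) y^{1/w} + h.o.t._{>1/w},$$
and since $c - c_1 > 0$ and $c_2 - c > 0$, both differences are strictly positive for all sufficiently small $y>0$. Hence $\alpha_1 < \alpha < \alpha_2$ as germs, and the zone (connectedness) property of $A$ forces $\alpha \in A$.

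I do not expect a genuine obstacle: the only thing to watch is that Lemma 5.1 really does supply the \emph{same} polynomial $p(y)$ for every arc in $A$, so that when we subtract two arcs of $A$ the leading term in their difference is at order $\ge 1/w$, and exact equality to $1/w$ (from closedness) then pins down a unique coefficient for each arc at the exponent $1/w$. Everything else is a short comparison of Puiseux leading terms.
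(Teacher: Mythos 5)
Your proof is correct and follows essentially the same route as the paper's. The paper's proof is terser---it addresses only the ``moreover'' clause via the leading-term comparison and connectedness, treating the existence of $p(y)$ and of $\alpha_1,\alpha_2$ achieving $d(\alpha_1,\alpha_2)=w$ as immediate from Lemma 5.1 and the definition of a closed zone---whereas you spell out those preliminary steps explicitly; the substance is identical.
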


\begin{proof}Since $c_1 < c < c_2$ and all other terms of order less or equal $1/w$ with non-zero coefficients are identical
in $\alpha_1, \alpha, \alpha_2$, we have $\alpha_1 < \alpha <\alpha_2$.  Since $A$ is connected,
$\alpha \in A$.
\end{proof}

\begin{lemma}  If $A$ is an open zone with $w(A) = w$, there exists a finite Puiseux series $p(y)$ with order less than $\frac{1}{w}$ and a 
constant $c \in \R$ such that every $\alpha \in A$ has the form
$$
\alpha = p(y) + cy^{\frac{1}{w}} + h.o.t.
$$
and for every rational  number $r > \frac{1}{w}$ there is an interval $I$ such that every $$\alpha = p(y) + cy^{\frac{1}{w}} + dy^r +h.o.t.$$
with $d\in I$ is in $A$.  
\end{lemma}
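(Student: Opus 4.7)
My plan is to establish the lemma in two parts: (a) the existence of a uniform $y^{\frac{1}{w}}$-coefficient $c$, and (b) the existence of the interval $I$ for each rational $r > \frac{1}{w}$.

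For (a), I first invoke Lemma 5.1 to obtain the common finite Puiseux prefix $p(y)$ of order less than $\frac{1}{w}$, and then argue by contradiction. If two arcs $\alpha_1, \alpha_2 \in A$ had $y^{\frac{1}{w}}$-coefficients $c_1 \ne c_2$, then the leading term of $\alpha_1 - \alpha_2$ would be at order exactly $\frac{1}{w}$, forcing $d(\alpha_1, \alpha_2) = w$. This would realize the supremum $w(A) = w$ inside $A$, which by definition would make $A$ closed and contradict openness. Hence every arc in $A$ shares a common $y^{\frac{1}{w}}$-coefficient $c$.

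For (b), fix a rational $r > \frac{1}{w}$ and consider
\[
I := \bigl\{\,d \in \R \ :\ \text{every arc } \alpha = p(y) + cy^{\frac{1}{w}} + dy^{r} + h.o.t._{>r} \text{ lies in } A\,\bigr\}.
\]
I will show $I$ is an interval by a connectedness argument paralleling the proof of Lemma 5.2. Take $d_1 < d_2$ in $I$ and $d \in (d_1, d_2)$. For any tail $\tau$ of order greater than $r$, the three arcs $\gamma_i = p(y) + cy^{\frac{1}{w}} + d_i y^{r} + \tau$ ($i = 1, 2$) and $\gamma = p(y) + cy^{\frac{1}{w}} + dy^{r} + \tau$ agree on every Puiseux term except the coefficient of $y^r$. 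Their first difference occurs at order $r$ with strict inequalities $d_1 < d < d_2$, so $\gamma_1 < \gamma < \gamma_2$. Since $\gamma_1, \gamma_2 \in A$ by assumption and $A$ is connected, $\gamma \in A$; as $\tau$ was arbitrary, $d \in I$.

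If the lemma is additionally understood to claim that $I$ is nondegenerate (in analogy with the $c_1 < c_2$ of Lemma 5.2), then the hard part will be producing two distinct witnesses $d_1 < d_2$ in $I$. My approach would be to exploit openness: since $w(A) > \frac{1}{r}$, there exist $\alpha_-, \alpha_+ \in A$ whose difference has order $s \in (\frac{1}{w}, r)$, and the Lemma 5.2 argument supplies a whole interval of $y^s$-coefficients attained in $A$ over arbitrary higher-order continuations. The main obstacle is converting this order-$s$ flexibility into two witnesses of the precise normal form $p(y) + cy^{\frac{1}{w}} + d_i y^{r} + h.o.t._{>r}$, with vanishing coefficients at every rational order strictly between $\frac{1}{w}$ and $r$. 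I would attack this by analyzing the sub-family $A_\circ \subset A$ of arcs with all such intermediate coefficients equal to zero: a direct sign comparison (any nonzero leading intermediate term of the difference would force opposing signs on the two sandwich inequalities) shows $A_\circ$ is itself a zone, and one then transfers width from $A$ into $A_\circ$ by using the tail freedom guaranteed above to clean up the intermediate terms of the original witnesses.
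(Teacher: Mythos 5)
The paper does not supply a proof of this lemma, so there is nothing to compare against directly; I can only assess your argument on its own merits. Your part (a) is correct and is exactly the right idea: Lemma 5.1 gives the common prefix $p(y)$ of order $<\tfrac1w$, and two arcs in $A$ with distinct $y^{1/w}$-coefficients would have $\mathcal{O}(\alpha_1-\alpha_2)=\tfrac1w$, realizing the supremum and forcing $A$ to be closed. Your part (b) — that the maximal set
$I=\{d: \text{every }p+cy^{1/w}+dy^{r}+h.o.t._{>r}\in A\}$
is order-convex and hence an interval — is also a correct translation of the connectedness hypothesis.

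Where you stop, however, is exactly where the lemma bites: you acknowledge that you cannot produce two distinct witnesses $d_1<d_2\in I$, and you flag this as the ``hard part.'' Your instinct here is sound, because the stronger reading of the lemma (a nondegenerate, or even nonempty, interval $I$ for \emph{every} rational $r>\tfrac1w$) is in fact not a consequence of the hypotheses. Consider the order-convex set
\[
A \;=\; \bigl\{\gamma\in\mathcal R \;:\; \gamma(y) > y^{2.1}\ \text{for small }y>0,\ \ \mathcal{O}(\gamma)>2\bigr\}.
\]
One checks that $A$ is a zone, and taking $\gamma_1 = y^{2+\epsilon}$, $\gamma_2 = 2y^{2+\epsilon}$ as $\epsilon\to 0^+$ shows $w(A)=\tfrac12$ with the supremum not attained, so $A$ is open. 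Here $p\equiv 0$ and $c=0$. But for any rational $r>2.1$, every arc $dy^{r}+h.o.t._{>r}$ has order $>2.1$ and hence lies strictly below $y^{2.1}$ for small $y$, so it is not in $A$: the interval $I$ is empty. Thus no argument — including the one you sketch via intermediate order-$s$ witnesses — can close the gap for all $r>\tfrac1w$. What \emph{is} recoverable, and what the paper actually uses in the proof of Lemma 5.6, is the weaker statement that such a nondegenerate interval exists for all $r$ in some initial range $(\tfrac1w, r_0)$; in the example above $r_0=2.1$ works, and in the paper's own example $C=\{x=y+y^{2}+cy^{r}+h.o.t.,\ r>2,\ c>0\}$ every $r>2$ works with $I=(0,\infty)$. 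So rather than trying to force the construction through, you should note that the statement as written needs either the ``for some initial range of $r$'' qualification, or an additional structural hypothesis on $A$ (e.g.\ that $A$ accumulates one-sidedly on the boundary arc $p(y)+cy^{1/w}$ in the order topology), and that your parts (a) and (b) together with this restricted range suffice for the downstream application in Lemma 5.6.
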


\smallskip

\begin{definition}  Two zones $A, B$ are said to be {\it adjacent} if $A\cup B$ is connected and $A\cap B = \emptyset $.
\end{definition}

Note that the zones $A$ and $B$ of width $\frac{1}{2}$ in the example above are adjacent.  

\begin{lemma} \label{openclosed} Suppose that $A, B$ are adjacent zones. 
\begin{enumerate}
\item If $w = w(A)\ge w(B)$, then $w(A \cup B) = w$.
\item If $A$ is closed and $w = w(A)\ge w(B)$, then $A \cup B$ is closed.
\item If $A$ is open and $w(A) > w(B)$, then $A \cup B$ is open.
\item If $A$ is closed and $B$ is open, then $w(A)> w(B)$.
\end{enumerate}
\end{lemma}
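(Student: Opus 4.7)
My overall strategy combines the three preceding structure lemmas, which pin down the Puiseux form of arcs in a zone of given width, with the defining feature of ``adjacent'' (disjoint union that is connected in $\mathcal{R}$). The technical backbone for every part is a compatibility claim that I would establish first: if $A,B$ are adjacent zones with leading Puiseux polynomials $p_A,p_B$ (furnished by the first structure lemma), then $p_A$ and $p_B$ agree at every order strictly less than $\min(1/w(A),1/w(B))$. I prove this by contradiction: if they disagree at some such order $r$ with coefficients $a_r\neq b_r$, pick any $c$ strictly between $a_r$ and $b_r$ and build an arc $\gamma\in\mathcal{R}$ whose Puiseux expansion equals the common initial segment below order $r$ and whose $y^r$ coefficient equals $c$. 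Then $\gamma$ lies strictly between any fixed $\alpha\in A$ and $\beta\in B$ in the total order on $\mathcal{R}$ (the sign of the difference being read off the leading $y^r$ term), so connectedness of $A\cup B$ forces $\gamma\in A\cup B$; but its $y^r$ coefficient matches neither $p_A$ nor $p_B$, so $\gamma$ lies in neither $A$ nor $B$, a contradiction.

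For Part (1), the bound $w(A\cup B)\ge w$ is immediate from $A\subseteq A\cup B$. For the reverse, $w(A)\ge w(B)$ means $1/w(A)\le 1/w(B)$, and the compatibility gives $p_B=p_A+r(y)$ where $r(y)$ collects terms at orders in $[1/w(A),1/w(B))$; hence for any $\alpha\in A,\beta\in B$, the difference $\alpha-\beta$ is a sum of terms of orders $\ge 1/w(A)$, so $d(\alpha,\beta)\le w$. Part (2) follows at once: by (1), $w(A\cup B)=w$, and the pair $\alpha_1,\alpha_2\in A\subseteq A\cup B$ witnessing closedness of $A$ also witnesses closedness of $A\cup B$.

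For Part (3), apply (1) to obtain $w(A\cup B)=w$. Suppose, for contradiction, that $A\cup B$ is closed with witnesses $\gamma_1,\gamma_2\in A\cup B$ satisfying $d(\gamma_1,\gamma_2)=w$. The case both $\gamma_i\in A$ contradicts $A$ open; the case both $\gamma_i\in B$ gives $d\le w(B)<w$, again a contradiction. In the remaining case $\gamma_1\in A,\gamma_2\in B$, the open-zone structure lemma supplies a fixed $c_A$ with $\alpha=p_A+c_Ay^{1/w}+(\text{terms of order}>1/w)$ for all $\alpha\in A$; the same connectedness-at-order-$1/w$ argument as in the compatibility step then forces $c_A$ to equal the $y^{1/w}$ coefficient of $r(y)$ (taken to be $0$ if $r$ has no such term). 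A direct calculation of $\gamma_1-\gamma_2$ now shows its leading order strictly exceeds $1/w$, giving $d(\gamma_1,\gamma_2)<w$, a contradiction.

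For Part (4), assume for contradiction that $A$ is closed, $B$ is open, and $w(A)\le w(B)$. I split into $w(A)=w(B)$ and $w(A)<w(B)$. In the first subcase, the compatibility gives $p_A=p_B=p$; the closed-zone structure lemma produces a witness interval $(c_1,c_2)$ of $y^{1/w}$ coefficients whose corresponding arcs all lie in $A$, so disjointness forces $B$'s fixed coefficient $c_B\notin(c_1,c_2)$. Taking $c_B\ge c_2$ without loss of generality, iterated use of the closed-zone lemma (with pairs drawn from $A$ whose $y^{1/w}$ coefficients approach $c_B$, guaranteed by connectedness of $A\cup B$) shows $A$ contains all arcs with $y^{1/w}$ coefficient in $(c_1,c_B)$; then a connectedness argument at order $1/w$, comparing arcs with coefficient exactly $c_B$ whose higher-order terms fall outside the intervals furnished by the open-zone lemma for $B$, forces such arcs into $A$ and yields an overlap with $B$. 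The subcase $w(A)<w(B)$ runs parallel at the lower order $1/w(B)$, where compatibility additionally pins the corresponding coefficient of $r(y)$ to $c_B$, reducing to the first subcase after truncation. The principal obstacle will be orchestrating this coefficient-interval surgery between the closed and open structural descriptions so as to extract the overlap.
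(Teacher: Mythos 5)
Your compatibility claim (adjacent zones share the same truncated Puiseux polynomial below order $\min(1/w(A),1/w(B))$) is correct and is a good unifying device; the paper itself gives no proof beyond ``Straightforward,'' so there is nothing to compare against directly. Your arguments for parts (1), (2) and (3) check out: in (1) the only nontrivial case is a mixed pair $\alpha\in A$, $\beta\in B$, and the compatibility claim gives $\mathcal{O}(\alpha-\beta)\ge 1/w$; in (2) the witnesses from $A$ survive; and in (3) the extra step pinning $c_A$ to the $y^{1/w}$-coefficient of $p_B$ by the same intermediate-arc argument is valid and does push $\mathcal{O}(\gamma_1-\gamma_2)$ above $1/w$.

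Part (4), however, has a genuine gap and, more seriously, the statement you are trying to prove is false as written. The gap: Lemma 5.3 gives only a \emph{sufficient} condition for membership in the open zone $B$ (for each $r>1/w$ \emph{some} interval $I$ of $y^r$-coefficients lies in $B$); it does not characterize $B$, nor does it say anything about arcs ``outside the intervals.'' So you cannot conclude that an arc with $y^{1/w}$-coefficient exactly $c_B$ and higher-order terms missed by these intervals must be in $A$; it could simply be in neither $A$ nor $B$, or it could in fact be in $B$. The claimed overlap never materializes. That this step must fail is shown by the paper's own example just after Lemma 5.1: with $A=\{x=y+cy^2+h.o.t.,\ 0\le c<1\}$ and $B=\{x=y+y^2+h.o.t.\}$, the paper verifies that $A$ is closed, $B$ is open, $A$ and $B$ are adjacent, and $w(A)=w(B)=\tfrac12$. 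This is a direct counterexample to the strict inequality $w(A)>w(B)$ of part (4), so no proof of (4) as stated can succeed. (The weaker conclusion $w(A)\ge w(B)$ would be worth investigating instead, and parts (1)--(3) together with Lemma 5.6 are what the paper actually uses downstream, so the error appears to be harmless, but you should flag it rather than try to prove it.)
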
 
\begin{proof}  Straightforward.
\end{proof}
 
 \vskip .25in
 \begin{lemma}
 Suppose $A, B, C$ are adjacent zones (in that order) and that $A, C$ are closed.  Then $A \cup B \cup C$ is closed.  
 \end{lemma}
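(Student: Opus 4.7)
The plan is to apply Lemma \ref{openclosed} twice: first to combine $A$ and $B$ into a single closed zone, then to combine $A\cup B$ with $C$.

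\textbf{Step 1: $A\cup B$ is closed.} I split on the comparison of widths. If $w(A)\ge w(B)$, then since $A$ is closed, Lemma \ref{openclosed}(2) immediately gives that $A\cup B$ is closed with $w(A\cup B)=w(A)$. The interesting case is $w(B)>w(A)$. Here I use Lemma \ref{openclosed}(4) contrapositively: if $B$ were open, then (since $A$ is closed) we would have $w(A)>w(B)$, a contradiction. Hence $B$ is closed, and applying the version of Lemma \ref{openclosed}(2) with the roles of $A$ and $B$ swapped shows $A\cup B$ is closed with $w(A\cup B)=w(B)$. In either case $A\cup B$ is closed.

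\textbf{Step 2: $A\cup B$ and $C$ are adjacent.} Without loss of generality assume the ordering is $\alpha<\beta<\gamma$ for $\alpha\in A$, $\beta\in B$, $\gamma\in C$. Disjointness $(A\cup B)\cap C=\emptyset$ follows from $B\cap C=\emptyset$ (given) together with the fact that arcs in $A$ lie strictly below those in $C$ (since both lie on opposite sides of any arc in $B$). For connectedness of $A\cup B\cup C$: given $\alpha\in A$ and $\gamma\in C$ with $\alpha<\delta<\gamma$, pick any $\beta_0\in B$; then either $\delta\le\beta_0$ (forcing $\delta\in A\cup B$ by connectedness of $A\cup B$, or $\delta=\beta_0\in B$) or $\delta\ge\beta_0$ (forcing $\delta\in B\cup C$ similarly). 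Thus $A\cup B\cup C$ is connected.

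\textbf{Step 3: Combine $A\cup B$ with $C$.} Both zones are closed, so exactly the argument of Step 1 applies with $A\cup B$ in place of ``$A$'' and $C$ in place of ``$B$''. Comparing $w(A\cup B)$ to $w(C)$: if $w(A\cup B)\ge w(C)$, Lemma \ref{openclosed}(2) gives closedness directly; if $w(C)>w(A\cup B)$, the contrapositive of Lemma \ref{openclosed}(4) (applied to the closed zone $C$) shows $A\cup B$ is closed (already known), but what we need is that $C$'s closedness together with $w(C)>w(A\cup B)$ and the swapped form of (2) yields that $(A\cup B)\cup C$ is closed.

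The only real obstacle is conceptual rather than technical: one must notice that ``$A$ closed'' is enough to force $A\cup B$ closed regardless of which of $w(A),w(B)$ is larger, because Lemma \ref{openclosed}(4) rules out the bad configuration (adjacent to a closed zone of strictly larger width cannot be an open zone). Once this observation is isolated in Step 1, the rest is a symmetric second application.
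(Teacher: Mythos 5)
Your proof is correct, but it takes a genuinely different route from the paper's. The paper does not invoke Lemma~\ref{openclosed}(4) at all: it dispatches the cases ``$B$ closed'' and ``$w(B)\le\max(w(A),w(C))$'' by chaining applications of Lemma~\ref{openclosed}(2), and then re-derives from scratch, using the Puiseux-form structure lemmas for closed and open zones (Lemmas 5.1--5.3), the fact that the remaining case ($B$ open with $w(B)>\max(w(A),w(C))$) is impossible for adjacent $A,B,C$. You instead observe that this impossibility is precisely what part (4) of Lemma~\ref{openclosed} already asserts, which lets you first prove the sharper intermediate fact that $A\cup B$ is closed whenever $A$ is closed and $B$ is adjacent, and then absorb $C$ with a second application of part (2). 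This is more modular, it avoids any further appeal to Lemmas 5.1--5.3, and it makes visible that only one of the two outer closedness hypotheses is actually used in the first half of the argument (the second is needed only to handle the possibility $w(C)>w(A\cup B)$ in Step 3). One caution worth flagging: Lemma~\ref{openclosed}(4) as stated, with the \emph{strict} inequality $w(A)>w(B)$, is actually contradicted by the paper's own example preceding Lemma 5.1 (there $A$ is closed, $B$ is open, they are adjacent, and $w(A)=w(B)=\tfrac12$); the intended statement is presumably $w(A)\ge w(B)$. Your case split is insensitive to this, since you invoke part (4) only when $w(B)>w(A)$ strictly, where either version of (4) yields the contradiction you need, so the proof survives --- but it would be worth noting this dependence explicitly, since you are citing (4) rather than reproving it as the paper does.
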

 \begin{proof} If $B$ is closed, the conclusion follows from the preceding lemma.  In addition, if
 $w(B) \le \max(w(A), w(C))$ then the conclusion again follows from the preceding lemma.  So, suppose $B$ is open
 and $w=w(B) > \max(w(A), w(C))$; we will show this leads to a contradiction, thereby proving the Lemma.  There is a finite Puiseux series $p(y)$ and a number $b$ such that all elements of $B$ are
 of the form $p(y) + by^{\frac{1}{w}} +  h.o.t$, with $w= w(B)$.  By adjacency, all elements of $A$ and $C$ are of the form
 $$p(y) + by^{1/w} + ay^{r_A} + \ldots , \quad r_A > \frac{1}{w(B)}$$
 $$p(y) + by^{1/w} + cy^{r_C} + \ldots, \quad r_C > \frac{1}{w(B)}. $$
 Choose $r$ such that 
 $${1/w} < r < \min(r_A, r_C)$$ 
 and an interval $I$ so that 
 $$ p(y) + by^{1/w} + dy^r$$
 are in $B$ for all $d \in I$.  But these don't lie between $A$ and $C$, a contradiction.  
 \end{proof}

 \section{Increasing, decreasing, and flat zones and their heights}
 
We now turn to the behavior of $f$ on subsets of arcs in $\mathcal{R}$.  By the assumptions we made at the outset of section 4, we are examining the graph $X$ of a function $z= f(x, y),\  f(0,0) = 0$, with tangent plane $C(X, \0)$ the $xy$-plane, and the Nash cone 
$\N(X, \0)$ consisting of all planes containing the $y$-axis.  The limit of $f_y$ along an arc $\gamma(y) \in \mathcal{R}$  will determine the tangent plane in  $\N(X, \0)$ to which $f_y(\gamma(y))$ tends. %change
In this section we will state all the results in the semialgebraic case, but they also hold in the subanalytic case.

We say that a subset of $\mathcal{R}$ is $FI$ ({\it fast increasing}), $FD$ ({\it fast decreasing}) or $FL$ ({\it flat}) according as:
\begin{eqnarray*}
FI &:& \{\gamma\ | \frac{\partial f}{\partial x} \circ\gamma > 0  
{\rm \ and}\ \rightarrow\ \infty\  {\rm as}\ y \rightarrow 0\}\\%change
FD &:& \{ \gamma\ | \frac{\partial f}{\partial x} \circ\gamma < 0 
{\rm \ and}\ \rightarrow\ -\infty\  {\rm as} \ y \rightarrow 0\}\\%change
FL &:& \{ \gamma\ | \frac{\partial f}{\partial x}\circ\gamma  \not\rightarrow \infty \ {\rm as}\ y \rightarrow 0\}
\end{eqnarray*}%change
We include in $FI$ those arcs $\gamma$ for which $\frac{\partial f}{\partial x} \circ\gamma = \infty$ and $\frac{\partial f}{\partial x}  > 0$  near $\gamma$ on both sides (in this case we say  $\frac{\partial f}{\partial x} \circ\gamma = +\infty$ ). Similarly we include in $FD$ those arcs $\gamma$ for which $\frac{\partial f}{\partial x} \circ\gamma = -\infty$.
\noindent By taking unions of connected subsets of arcs that are $FI, FD, FL$, we can (and will) assume $FI, FD, FL$ regions are ``maximally connected" zones.

%Note that for every arc $\alpha (y)$ we have $\frac{\partial f}{\partial x} (\alpha(y)) \rightarrow \ell$ for %some 
%$\ell \in [ -\infty, \infty] $.  In particular, we allow $\frac{\partial f}{\partial x} \rightarrow \infty$.
%change  -- repeated below

%\noindent{\bf Remark:} One can work on all of $U$ by replacing $\frac{\partial}{\partial x}$ by the directional derivative $D_{\overrightarrow{V}}f$ with 
%\begin{eqnarray*}
%\overrightarrow{V} &=& \frac{1}{\sqrt{x^2 + y^2}} (y, -x)\\
%\\
%D_{\overrightarrow{V}}f &=& \frac{uf_x - xf_y}{\sqrt{x^2+y^2}}
%\end{eqnarray*}
%but the calculations become messier.
%add picture

In every $FI$ or $FD$ zone there is an arc
along which $f_{xx} =0$ or $f_x = \infty$; inside every $FL$ zone there is at
least one arc on which $f_{xx} =0$ (if the $FL$ zone connects two $FI$ or two $FD$
zones) or an arc with $f_x =0$ (if the $FL$ zone connects an $FI$ with an
$FD$).  The points where these derivatives hold form a semialgebraic set $S$,
and so the number of connected components  of $S-\{0\}$ is finite. Thus the numbers of maximally connected flat
zones and maximally connected FI and FD zones are finite.  The proposition below follows.

\begin{proposition}
$\mathcal{R}$ is the finite disjoint union of all the maximally connected $FI$, $FD$ and $FL$ regions.  Each $FI$ and $FD$ region is preceded and followed by an $FL$ region.
\end{proposition}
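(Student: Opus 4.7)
The plan is to reduce the proposition to three assertions: (i) every arc in $\mathcal{R}$ admits a well-defined classification as $FI$, $FD$, or $FL$, so these families partition $\mathcal{R}$; (ii) no two adjacent maximally connected zones are both $FI$, both $FD$, or one $FI$ and one $FD$; and (iii) there are only finitely many maximally connected zones. For (i), any analytic arc $\gamma\in\mathcal{R}$ gives rise to a semialgebraic function $y\mapsto f_x(\gamma(y))$ on a punctured neighborhood of $0$, which by the standard semialgebraic dichotomy admits a one-sided limit in $\R\cup\{\pm\infty\}$ and is eventually of constant sign; this places $\gamma$ in exactly one of the three classes.

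For (ii), adjacent $FI$-$FI$ (or $FD$-$FD$) zones would have a connected union on which every arc still satisfies the $FI$ (resp.\ $FD$) condition, contradicting maximality. For adjacent $FI$-$FD$, say $A<B$ with $A\in FI$ and $B\in FD$, I would use the intermediate value theorem: for each small $y>0$, $f_x(\alpha(y),y)>0$ for any $\alpha\in A$ and $f_x(\beta(y),y)<0$ for any $\beta\in B$, so there is some $\tilde x(y)$ strictly between $\alpha(y)$ and $\beta(y)$ with $f_x(\tilde x(y),y)=0$. The semialgebraic implicit function theorem applied to $\{f_x=0\}$ packages these points into an analytic arc $\tilde\gamma$ with $\alpha<\tilde\gamma<\beta$ on which $f_x\equiv 0$, forcing $\tilde\gamma\in FL$. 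But connectedness of $A\cup B$ requires $\tilde\gamma\in A\cup B$, a contradiction. The degenerate case where the would-be interface arc $\gamma^*$ itself has $f_x\circ\gamma^*=\infty$ is excluded by the clause in the extended definitions of $FI$ and $FD$ demanding $f_x$ of constant sign on both sides of $\gamma^*$, which cannot hold at an $FI$-$FD$ interface.

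For (iii), I would simply reprise the argument from the paragraph preceding the proposition: each $FI$ (or $FD$) zone contains an arc along which $f_x=\infty$ or $f_{xx}=0$, and each $FL$ zone flanked by such a zone contains an arc along which $f_x=0$ or $f_{xx}=0$. The locus $S\subset W$ defined by these equations is semialgebraic and of dimension at most one near $0$, so $S\setminus\{0\}$ has only finitely many connected components; each maximally connected zone contributes at least one such arc, forcing finiteness.

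The step I expect to be the main obstacle is (ii) in the $FI$-$FD$ case, specifically assembling the pointwise intermediate value zeros into a single analytic arc $\tilde\gamma\in\mathcal{R}$ lying strictly between $A$ and $B$; this requires a semialgebraic selection combined with the local structure of the zero set $\{f_x=0\}$, together with care at arcs where $f_x=\infty$ is included in $FI$ or $FD$ by the extended definition. The remaining pieces are essentially bookkeeping on top of the finiteness count already sketched in the text.
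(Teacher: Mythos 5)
Your finiteness argument in (iii) is exactly the paper's: it reduces the count of maximal zones to the finitely many connected components of the semialgebraic locus $S$ cut out by $f_x=0$, $f_{xx}=0$, and $f_x=\infty$. The paper treats (i) and the alternation claim as implicit, so your IVT argument in (ii) for ruling out an adjacent $FI$--$FD$ pair is a useful piece of explicit bookkeeping, and the semialgebraic curve selection needed to turn the pointwise zeros of $f_x$ into a genuine arc $\tilde\gamma$ between the two zones is routine; so far this is the same route as the paper, just written out.

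The one place your reasoning is off is the resolution of the degenerate case in (ii). You say that an interface arc $\gamma^*$ with $f_x\circ\gamma^*=\infty$ and a sign change of $f_x$ across it is ``excluded by the clause in the extended definitions.'' That clause only shows that such a $\gamma^*$ would lie in none of $FI$, $FD$, $FL$; it does not show that $\gamma^*$ cannot exist, and if it did exist it would break the partition you assert in (i), not just the $FI$--$FD$ adjacency step. The actual exclusion is geometric and comes from the standing hypotheses of Section 4: $X=\Gamma f$ is $C^2$-nonsingular away from $\0$ and $\pi|_X$ is a homeomorphism. Near an arc where $T_xX$ is vertical, $X$ is locally a $C^2$ graph $x=g(y,z)$ over the $(y,z)$-plane with $g_z=0$ along the arc; if $g_z$ changed sign there, $\pi$ would fold and $f$ would be multivalued, so $g_z$ (hence $f_x$) must keep a single sign on both sides. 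Adding this observation secures (i) as a genuine partition and removes the worry you flagged, after which the rest of your proposal goes through.
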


Note that for every arc $\alpha (y)$, we have $\frac{\partial f}{\partial x} (\alpha (y))\rightarrow \ell$ for some $\ell \in [-\infty,\infty ]$.  In particular, we allow $\frac{\partial f}{\partial x}(\alpha(y)) \rightarrow \infty$.

\begin{lemma} Given an arc $\alpha$ as above, there exists a semialgebraic  open set $U$ containing $\alpha$   such that if an arc $\beta(y)$ lies in $U$, then $\frac{\partial f}{\partial x}(\beta(y)) \rightarrow \ell$.  In particular, this holds for $\ell = \infty$.
\end{lemma}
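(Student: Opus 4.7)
The plan is to take for $U$ a semialgebraic ``horn neighborhood'' of $\alpha$ and use the mean value theorem, together with a Puiseux bound on $f_{xx}$, to propagate the limit $\ell$ from $\alpha$ to nearby arcs. First I would set up coordinates: since $\alpha \in \mathcal{R}$ has a convergent Puiseux expansion, pick a finite truncation $\alpha_N(y)$, which is semialgebraic in $y$, with $\alpha(y) - \alpha_N(y) = O(y^{\rho})$ for any fixed $\rho$ chosen in advance. The semialgebraic substitution $x \mapsto x - \alpha_N(y)$ makes $\alpha$ an arc of order at least $\rho$, after which I take
\[
U = \{(x,y) : 0 < y < \delta, \ |x| < y^r\}
\]
for an $r > 0$ to be chosen with $r < \rho$ (so that $\alpha \subset U$); this $U$ is manifestly semialgebraic.

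The key step is a uniform bound on $f_{xx}$ on $U$. Because $f$ is $C^2$ off $0$ and semialgebraic (resp.\ subanalytic), $f_{xx}$ is semialgebraic (subanalytic) on $W \setminus \{0\}$, and on the fixed horn $H = \{|x|<y,\ 0 < y < \delta\}$ the slice-supremum $M(y) = \sup_{|x|<y}|f_{xx}(x,y)|$ is a semialgebraic (subanalytic) function of one variable, hence admits a Puiseux expansion at $0$. Thus $M(y) \le Cy^{-s}$ for some constants $C, s \ge 0$ and $y$ small; since $U \subset H$ whenever $r \ge 1$, the same bound holds on $U$. Now pick $r > \max(s, 1)$ while keeping $r < \rho$ (permissible since $\rho$ was arbitrary). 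For any arc $\beta \subset U$, the mean value theorem gives
\[
|f_x(\beta(y), y) - f_x(0, y)| \;=\; |f_{xx}(\xi(y), y)| \cdot |\beta(y)| \;\le\; Cy^{-s} \cdot y^r \;=\; Cy^{r-s} \;\longrightarrow\; 0.
\]
Applying this estimate to $\alpha$ itself, which also lies in $U$, forces $f_x(0, y) \to \ell$, and then the same estimate for $\beta$ yields $f_x(\beta(y), y) \to \ell$. The divergent case $\ell = \pm\infty$ is handled identically, since a perturbation tending to $0$ does not alter divergence to $\pm\infty$.

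The main obstacle I anticipate is the apparent circularity between the choice of $r$ and the exponent $s$: the horn $U$ depends on $r$, yet $s$ is read off from a bound valid on $U$. This is resolved by extracting $s$ once from the fixed horn $H$ (the case $r=1$), which contains every smaller horn with $r \ge 1$; only after $s$ is in hand is $r$ chosen above $s$. A secondary, minor point is the need to truncate $\alpha$'s Puiseux series to keep all objects in the semialgebraic (subanalytic) category, but this is harmless because the series converges and the truncation order $\rho$ may be taken as large as required.
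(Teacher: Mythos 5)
Your proof hinges on a claimed bound $M(y) = \sup_{|x|<y}|f_{xx}(x,y)| \le C\,y^{-s}$ on the fixed horn $H$, and this is where the argument breaks down. The hypothesis in Section~4 is that the \emph{graph} $X=\Gamma f$ is $C^2$-nonsingular away from $\mathbf{0}$, not that $f$ itself is $C^2$: the projection $\pi:X\to\R^2$ is allowed to be singular along arcs where $T_xX$ is vertical, and there $f_x$ is infinite and $f_{xx}$ does not exist. Such arcs (the $FI$/$FD$ cores where $f_x\circ\gamma=\pm\infty$) can perfectly well lie inside the width-$y$ horn $H$ around $\alpha_N$, so $M(y)$ is not a finite semialgebraic function of $y$ and admits no Puiseux bound. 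Even when $f$ happens to be smooth, semialgebraicity of $f_{xx}$ on $H\setminus\{0\}$ does not give finiteness of a slice-supremum over an open interval. The circularity you flag (choosing $r$ after $s$) is indeed resolved as you say, but it is the existence of $s$ itself that fails.

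The case $\ell=\infty$ makes this unambiguous: if $f_x\circ\alpha\to\infty$, then $X$ has a vertical tangent plane along $\alpha$, $f$ is not $C^1$ in the $x$-direction there, and the mean value theorem step is not even formally available; ``a perturbation tending to $0$ does not alter divergence'' presupposes a finite difference that doesn't exist. Yet the statement explicitly asserts the conclusion for $\ell=\infty$, and the paper relies on exactly that case.

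The paper's proof sidesteps all of this by working with $g=f_x$ directly rather than through $f_{xx}$: one observes that $g$ is semialgebraic, that the convergence $g(\alpha(y))\to\ell$ can be captured by a single semialgebraic ``rate'' function $h(y)\to 0$ with $|g(\alpha(y))-\ell|<h(y)$, and then one simply sets $U=\{|g(x,y)-\ell|<h(y)\}$. This $U$ is a semialgebraic neighborhood of $\alpha$, and any arc $\beta\subset U$ satisfies $g(\beta(y))\to\ell$ by construction; the $\ell=\infty$ case is handled with the obvious modification (e.g.\ $U=\{g(x,y)>1/h(y)\}$). No second derivatives, no MVT, no horn geometry. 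If you want to salvage a horn-type argument, you would first have to shrink the horn so as to exclude the vertical-tangent locus and then establish the bound on $f_{xx}$ there, but at that point you have essentially re-derived the sublevel-set construction and the extra machinery buys nothing.
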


This lemma could be avoided, but it is convenient.  For example, it follows that if $A$ is $FI$ (resp. $FD$, resp. $FL$) and $\alpha \in A$, then there exist $\beta_1, \beta_2 \in A$ with $\beta_1 < \alpha < \beta_2$.

\begin{proof} The partial derivative of a semialgebraic function is semialgebraic (this is  Exercise 2.10 in \cite{C}, with solutions to the exercise found online).  So $g(x,y) = f_x(x,y)$ is a semialgebraic function and we're given an arc $\alpha(y)$ with $g(\alpha(y))$ going to
\ $\ell$\ as $y$ goes to $0$.  Then there is a semialgebraic positive function $h(y)$ which goes to $0$ as $y\rightarrow 0$ and 
$\| g(\alpha(y))  - \ell \|< h(y)$.  So $U = \|g(x,y)- \ell \|<h(y)$ is a semialgebraic set containing $\alpha$, and any arc $\beta \in  U$ also satisfies $f_x(\beta(y)) \rightarrow \ell$, proving the lemma.  \end{proof}

\begin{definition} For any $A$ that is an $FI, FD$ or $FL$ region, define the 
height $h(A)$ to be
$$
h(A) = \sup \left\{ \frac{1}{O(f(\alpha(y))-f(\beta(y))) }, \alpha, \beta \in A\right\}.
$$
\end{definition}
\noindent Again, this term is the reciprocal of that used by Birbrair et al. (see \cite{BG1}, \cite{BG2}).

\begin{proposition}
a)  If $A$ is $FL$, then $h(A) \le w(A)$.\\
b) If $A$ is $FI$ or $FD$, then $h(A) \ge w(A)$.
\end{proposition}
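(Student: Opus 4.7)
The plan is to compare the orders in $y$ of $f(\alpha(y),y)-f(\beta(y),y)$ and $\alpha(y)-\beta(y)$ by controlling $f_x$ on the planar region between the two arcs. Fix $\alpha,\beta\in A$ and let $R_{\alpha,\beta}=\{(x,y):0<y\ll 1,\ \min(\alpha(y),\beta(y))\le x\le\max(\alpha(y),\beta(y))\}$. The fundamental theorem of calculus applied to $x\mapsto f(x,y)$ at fixed small $y>0$ gives
$$
f(\alpha(y),y)-f(\beta(y),y)=\int_{\beta(y)}^{\alpha(y)}f_x(t,y)\,dt,
$$
which makes sense since the projection's singular set $\Sigma(\pi)$, where $f_x$ may be infinite, is a $1$-dimensional semialgebraic set meeting each horizontal slice in finitely many points. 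So the whole argument reduces to estimating $|f_x|$ on $R_{\alpha,\beta}$.

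For part (a), I will show that $|f_x|$ is uniformly bounded by some $M$ on $R_{\alpha,\beta}$ near $y=0$. If not, then for each $N$ the semialgebraic set $\{(x,y)\in R_{\alpha,\beta}:|f_x(x,y)|>N\}$ accumulates at $0$, and the curve selection lemma (with a diagonal selection across the $N$) yields an analytic arc $\gamma\subset R_{\alpha,\beta}$ along which $|f_x\circ\gamma|\to\infty$. Since $\gamma$ lies strictly between $\alpha$ and $\beta$, connectedness of the zone $A$ forces $\gamma\in A$, contradicting $A$ being $FL$. Granted the bound $|f_x|\le M$, the integral identity gives $|f(\alpha(y),y)-f(\beta(y),y)|\le M|\alpha(y)-\beta(y)|$, so $\mathcal{O}(f(\alpha)-f(\beta))\ge\mathcal{O}(\alpha-\beta)$. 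Taking reciprocals and then the supremum over $\alpha,\beta\in A$ yields $h(A)\le w(A)$.

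For part (b), assume $A$ is $FI$; the $FD$ case is symmetric. I set $m(y):=\inf\{f_x(x,y):(x,y)\in R_{\alpha,\beta}\}$, a semialgebraic function of $y$, and show $m(y)\to+\infty$ as $y\to 0$. Otherwise, for some $K$ the semialgebraic set $\{(x,y)\in R_{\alpha,\beta}:f_x(x,y)\le K\}$ accumulates at $0$, and curve selection produces an arc $\gamma\subset R_{\alpha,\beta}$ with $f_x\circ\gamma\le K$; by connectedness of $A$, this $\gamma$ lies in $A$, contradicting $FI$. Hence $f_x\ge 1$ on $R_{\alpha,\beta}$ for $y$ small, so $|f(\alpha(y),y)-f(\beta(y),y)|\ge|\alpha(y)-\beta(y)|$ and therefore $\mathcal{O}(f(\alpha)-f(\beta))\le\mathcal{O}(\alpha-\beta)$, giving $h(A)\ge w(A)$ after taking reciprocals and supremum.

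The main technical obstacle is handling arcs $\gamma\subset\Sigma(\pi)\cap A$ where $f_x\equiv\pm\infty$: the integral identity must be read as an improper integral, and the bad-arc produced by curve selection in (a) must be handled whether it lies in $\Sigma(\pi)$ (in which case $f_x\circ\gamma$ is already $\pm\infty$, forbidden inside an $FL$ zone) or off of it. Semialgebraicity, together with the fact that $f_x$ has constant sign throughout an $FI$ or $FD$ zone, keeps both the integral well-defined and the sign-comparison arguments clean; once these are in place, the order estimates above pass through verbatim.
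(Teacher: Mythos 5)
Your argument is essentially the paper's: compare $\Delta z = f(\alpha)-f(\beta)$ to $\Delta x = \alpha-\beta$ by controlling $f_x$ on the planar region between the two arcs, then pass to orders and take suprema. The only differences are cosmetic — the paper invokes the mean value theorem rather than the integral form of FTC, and simply asserts that $|f_x|$ is bounded on the region in the flat case (resp.\ tends to $\infty$ in the FI/FD case), whereas you justify those facts explicitly via curve selection together with connectedness of the zone $A$; your version is correct and just fills in a step the paper leaves implicit.
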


\begin{proof}
Choose any two arcs $\alpha, \beta \in A, \alpha < \beta$.  Let
\begin{eqnarray*} 
\Delta x(y) &=& |\beta(y) - \alpha(y)| \ {\rm and}\\
\Delta z(y) &=& |f(\beta(y), y) - f(\alpha (y), y)|.
\end{eqnarray*}
By the mean value theorem, for each $y$, there exists an $x_y$ between $\alpha (y)$ and
$\beta (y)$ such that
$$\frac{\Delta z}{\Delta x} = |f_x(x_y, y)|.$$
If $A$ is flat, then $|f_x|$ is bounded on the region between $\alpha$ and $\beta$ and so
$\frac{\Delta z}{\Delta x}$  is bounded which means $\mathcal{O}(\Delta z(y)) \ge \mathcal{O}(\Delta x(y))$. Since this holds for all pairs 
$\alpha, \beta \in A$, we have $h(A) \le w(A)$.  If $A$ is $FI$ or $FD$, then 
$|f_x(x_y, y)| \rightarrow \infty,$ which means $\mathcal{O}(\frac{\Delta z}{\Delta x}) < 0$ 
and hence $\mathcal{O}(\Delta z) < \mathcal{O}(\Delta x)$.  Taking the $\sup$ over $A$, $h(A) \ge w(A)$.  (We don't have an example where it isn't strict inequality).
\end{proof}

Since we assume our $\Gamma f$ is assumed tangent to the plane at $0$, every $f(\gamma (t)) $ has order greater than 1; since our wedges $W$ are closed, we can conclude:

\begin{proposition}  Assuming $\Gamma f|W $ is tangent to $R^n$ at $0$, then $h(W)<1$.

\end{proposition}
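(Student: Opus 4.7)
The plan is to exploit two things: the hypothesis that the tangent cone of $\Gamma f$ is horizontal (which makes $f$ sub-linear near $\0$), and the fact that the wedge $W = \{|x|\le ky\}\cap U$ is a \emph{closed} semialgebraic set on which the ambient norm $|(x,y)|$ is comparable to $y$. Together these will give a single Puiseux exponent $s>1$ controlling $|f|$ uniformly on $W$, and hence a uniform gap above $1$ in the orders $\mathcal{O}(f(\alpha(y))-f(\beta(y)))$.

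Concretely, first I would note that since $C_\0(X)=\R^2$ we have $f(x,y)=o(|(x,y)|)$ as $(x,y)\to\0$; and for $(x,y)\in W$ the inequalities $y\le |(x,y)|\le \sqrt{1+k^2}\,y$ hold, so $f(x,y)/y\to 0$ as $(x,y)\to\0$ within $W$. Next, consider the one-variable semialgebraic (resp.\ subanalytic) function
$$M(y)\ =\ \sup\{|f(x,y)|:(x,y)\in W\},$$
which is well-defined and finite for small $y>0$ because $W\cap\{y\text{-slice}\}$ is compact. By the Puiseux expansion in the semialgebraic case (or the preparation theorem in the subanalytic case alluded to in Section 8), $M(y)=cy^s+h.o.t.$ for some rational $s$ and $c\ge 0$. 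The limit $M(y)/y\to 0$ forces $s>1$ strictly.

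Finally, for any two arcs $\alpha,\beta\in\mathcal{R}$ (so both are contained in $W$), we have $|f(\alpha(y))|,|f(\beta(y))|\le M(y)$, hence
$$|f(\alpha(y))-f(\beta(y))|\ \le\ 2M(y)\ =\ O(y^{s}).$$
Therefore $\mathcal{O}(f(\alpha(y))-f(\beta(y)))\ge s>1$, and taking the supremum over all such pairs yields $h(W)\le 1/s<1$.

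The main delicate point is justifying the strict inequality $s>1$ rather than $s\ge 1$: this is exactly where the closedness of the wedge $W$ is used, since it lets us pass from the pointwise tangency condition $f=o(|(x,y)|)$ to a uniform estimate on slices, which then passes to the leading Puiseux exponent. Everything else is triangle inequality and the definition of order.
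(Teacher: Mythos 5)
Your proof is correct and fills in, with the function $M(y)=\sup_{(x,y)\in W}|f(x,y)|$ and its Puiseux expansion, exactly the detail the paper's one-sentence justification leaves implicit (tangency $\Rightarrow$ order of $f$ along every arc $>1$; the wedge structure $\Rightarrow$ a uniform gap above $1$). One small attribution quibble: the uniform gap really comes from the boundedness of the $y$-slices of the wedge plus semialgebraicity (which gives $M$ a Puiseux expansion with leading exponent $s>1$), not from closedness of $W$ per se, but that does not affect the validity of the argument.
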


\begin{conjecture} Maximal $FL$ zones are closed.  
\end{conjecture}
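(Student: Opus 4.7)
The plan is to argue by contradiction, invoking the preparation theorem for subanalytic functions (Lion--Rolin, or its semialgebraic precursor). Suppose that $A$ is a maximal $FL$ zone with $w = w(A)$ that is open rather than closed. By Lemma 3 applied to $A$, there exists a Puiseux polynomial $p(y)$ of order less than $1/w$ and a constant $c \in \R$ such that every $\alpha \in A$ has the form $\alpha(y) = p(y) + cy^{1/w} + h.o.t._{>1/w}$, and for every rational $r > 1/w$ there is an interval $I_r$ of coefficients $d$ for which $p(y) + cy^{1/w} + dy^r + h.o.t. \in A$. The goal is to produce an arc outside $A$ that is still $FL$, contradicting maximality.

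First I would apply the preparation theorem to the subanalytic function $g(x,y) = f_x(x,y)$ on a neighborhood of $0$ in $W$. This yields finitely many subanalytic ``center'' arcs $\theta_1(y), \ldots, \theta_k(y)$ partitioning the neighborhood into cells, on each of which $g$ has the prepared form
$$|g(x,y)| \;=\; y^{a}\,|x - \theta_j(y)|^{b}\,u(x,y),$$
where $a, b \in \Q$ depend on the cell and $u$ is a subanalytic unit (bounded above and below by positive constants). Since $A$ is connected and disjoint from the finitely many individual cell-boundary arcs $\theta_j$, it lies in a single cell. For any arc $\alpha$ in that cell with $\theta_j$ its nearer center,
$$\mathcal{O}(g \circ \alpha) \;=\; a + b\,\mathcal{O}(\alpha - \theta_j),$$
so the $FL$ condition $g\circ\alpha \not\to \infty$ becomes the \emph{non-strict} inequality $a + b\,\mathcal{O}(\alpha - \theta_j) \ge 0$.

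Because this characterization of $FL$ on the cell is a non-strict inequality on the order of $\alpha - \theta_j$, the set of $FL$ arcs in the cell is itself a closed zone in the sense of Section 5. Its extremal arcs have the form $\theta_j(y) + (\text{lower-order terms}) + c' y^{-a/b}$ when $b \ne 0$; if $b = 0$ then the entire cell is uniformly $FL$ or uniformly non-$FL$, and the cell boundaries $\theta_j,\theta_{j+1}$ must themselves be checked. In the nondegenerate case, these extremal arcs share the initial Puiseux data $p(y) + cy^{1/w}$ forced by Lemma 3, and at least one of them is adjacent to $A$ on the side where $A$ fails to be closed. Attaching it to $A$ produces a strictly larger $FL$ set that remains connected, contradicting the maximality of $A$.

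The main obstacle is the bookkeeping between cells of the preparation theorem and zones in $\mathcal{R}$: one must verify that $A$ really lies in a single cell (so no interior center $\theta_j$ splits it), handle the degenerate cases $a = 0$ or $b = 0$, and check that the extremal $FL$ arc extends $A$ precisely on the open side rather than on the already-closed side. A secondary technical point is choosing a formulation of the preparation theorem whose centers $\theta_j$ are Puiseux-like arcs, so that orders of $\alpha - \theta_j$ can be directly compared with the width data supplied by Lemma 3; this is where the deep subanalytic input flagged in the introduction is unavoidable.
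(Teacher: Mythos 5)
Your high-level strategy is sound and overlaps substantially with the paper's: use the subanalytic preparation theorem to show that $\mathcal{O}(f_x\circ\alpha)$ depends piecewise affinely on the order data of $\alpha$, so that the $FL$ condition $\mathcal{O}(f_x\circ\alpha)\ge 0$ is a closed condition. But there are two genuine gaps in the way you carry it out.

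First, you apply the preparation theorem directly to $g = f_x$. The version of the preparation theorem the paper uses (Theorem~3.2 of \cite{BFGG}) is stated for \emph{continuous} subanalytic germs, and $f_x$ is precisely \emph{not} continuous at the arcs where $\Gamma f$ has vertical tangent (these are the arcs in $V=\{F_z^*=0\}$). The paper flags this explicitly (``The preparation theorem doesn't directly apply to $f_x$ because it is not continuous at $V$'') and works around it by preparing the continuous functions $F_x^*=F_x(x,y,f)$ and $F_z^*=F_z(x,y,f)$ separately, then passing to the quotient $f_x=-F_x^*/F_z^*$. This is also why the paper needs the additional standing hypothesis that $\Gamma f\subseteq F^{-1}(0)$ for an analytic $F$ with $\dim(F^{-1}(0)\setminus\Gamma f)\le 1$; without it, it is not even clear that $f_x$ has the subanalytic structure your argument presumes. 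Your proposal should either import that hypothesis and the $F_x^*/F_z^*$ device, or cite a version of the preparation theorem that applies to discontinuous subanalytic functions and verify that the decomposition it produces is compatible with the blow-up locus $V$.

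Second, the step ``Since $A$ is connected and disjoint from the finitely many individual cell-boundary arcs $\theta_j$, it lies in a single cell'' is unjustified and, in fact, the heart of what needs proving. Nothing forces a maximal $FL$ zone to avoid the prepared centers $\theta_j$ (a center arc can perfectly well be $FL$), the centers are not the same thing as the cell boundaries (the preparation theorem only guarantees that $\{x=\theta(y)\}$ lies outside $T$ or on its boundary, and $T$ has other boundary arcs), and a connected zone of arcs can and in general does span several cells. The actual work in the paper is to build an ``order graph'' whose edges correspond to intervals of generic Puiseux exponents and whose vertices correspond to exceptional exponents and exceptional coefficients, and then to prove that the order function $\mathcal{O}(f_x)$ is continuous and piecewise affine on this graph \emph{including at the vertices}, i.e., across cell boundaries and across the branching where a center coincides with the arc to higher order. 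That cross-cell continuity is exactly what converts ``$FL$ inside each cell is closed'' into ``maximal $FL$ zones are closed,'' via the adjacency lemma (Lemma~5.7 of the paper). Your argument skips this, and so its conclusion (that the extremal arc of the single cell can be adjoined to $A$) does not by itself rule out $A$ being open: $A$ might already contain the arcs of several cells and fail to be closed only through an infinite descent along nested exceptional coefficients, which is the scenario the order-tree argument is designed to exclude.

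As a secondary point, your prepared form $|g(x,y)|=y^a|x-\theta_j(y)|^b u(x,y)$ suppresses the factor $a(y)$ from the paper's version; replacing $a(y)$ by a monomial $y^a$ loses the Puiseux data of $a(y)$ that feeds into the order computation, so even inside a single cell the claimed formula $\mathcal{O}(g\circ\alpha)=a+b\,\mathcal{O}(\alpha-\theta_j)$ needs the order of $a(y)$, not just a rational constant.
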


In \S 8,  we prove this conjecture under the proviso that there is   an analytic function $F(x,y,z)$ on a neighborhood of $0$, $X$ is subanalytic, $C^1$ and $X\subseteq F^{-1}(0)$,  $\dim{(F^{-1}(0) - X)}\le 1$, and $\pi: X\rightarrow \R^2$ is a homeomorphism near 0. 
%\begin{eqnarray*}
%	z^n = \frac{p(x,y)}{q(x,y)}, &&{\rm with\ }n\ {\rm odd}\ ,\ p, q \in \R[x,y] {\rm \ polynomials},\\ 
%	&&{\rm and \ } q \ne 0\ {\rm except\ possibly\ at\ } \0.
%\end{eqnarray*}%changemade

%Since the flat regions are of the form $FL_2, FL_1, FL_2$, possibly repeated 
%$$FL_2, FL_1, FL_2, \ldots, FL_2, FL_1, FL_2$$ each flat region is closed by Lemma 5.6.
%change

\begin{theorem}
If the flat zones of $X=\Gamma f|_W$ are closed (so if the hypotheses in the paragraph above attain), then $X$ is ambient semialgebraic bilipschitz to $W$ if and only if $X$ is normally embedded (that is,
1-regular).  In particular this holds for $W=\R^2$. %change
\end{theorem}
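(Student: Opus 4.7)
The forward direction is standard: $W$ is a convex wedge in $\R^2$, so its inner and outer metrics agree up to a constant and $W$ is normally embedded. Any ambient bilipschitz homeomorphism of $\R^3$ sending $W$ to $X$ preserves outer distances up to a multiplicative constant and sends paths to paths of comparable length, so normal embedding passes from $W$ to $X$.

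For the converse, assume $X$ is normally embedded. By the proposition at the start of \S 6, partition $\mathcal{R}$ into finitely many maximal $FI$, $FD$, and $FL$ zones, each $FI$/$FD$ zone being flanked by two $FL$ zones. By the theorem's hypothesis every maximal $FL$ zone is closed, so by Lemma~\ref{openclosed} each $FI$ or $FD$ zone is bracketed between two definite Puiseux arcs, giving concrete interfaces for a zone-by-zone construction. I would then build an ambient bilipschitz homeomorphism $\Phi\colon \R^3 \to \R^3$ with $\Phi(W\times\{0\})=X$ as follows.

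On each maximal $FL$ zone $A$, the inequality $h(A)\le w(A)$ together with the mean value theorem implies that $f$ is lipschitz in $x$ after the appropriate width-order rescaling, so a vertical translation $(x,y,z)\mapsto (x,y,z+f(x,y))$ serves as an ambient bilipschitz map on a neighborhood of $A$. On each maximal $FI$ or $FD$ zone $A$, where $f_x\to\pm\infty$, I would use normal embedding to extract a quantitative comparison between $w(A)$ and $h(A)$ by comparing, for arc pairs on opposite sides of $A$ at equal $z$-value, the inner distance on $X$ (which must traverse the cliff or pass near the origin) with the outer distance in $\R^3$. This comparison yields a $y$-dependent affine shear in the $(x,z)$-plane whose stretch factor, read off from the Puiseux expansions of the bracketing arcs, flattens $X|_A$ to a graph of uniformly bounded slope; composing with the vertical translation gives a local ambient bilipschitz map on a neighborhood of $A$. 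A semialgebraic partition of unity subordinate to the finitely many boundary arcs (well-defined because the $FL$ zones are closed) then glues these local maps into a global bilipschitz $\Phi$.

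The main obstacle is the $FI$/$FD$ construction: extracting from normal embedding the precise quantitative relation between $h(A)$ and $w(A)$ needed to define the shear, verifying that the shear has a uniform bilipschitz constant, and matching it smoothly to the vertical translations on adjacent $FL$ zones. The closedness of $FL$ zones (addressed in \S 8 via the preparation theorem for subanalytic sets) is essential for this gluing step, since without definite boundary arcs one cannot build well-defined tubes in which the local maps are interpolated.
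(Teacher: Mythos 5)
Your forward direction matches the paper's (outer bilipschitz equivalence preserves $\ell$-regularity, and $W$ is trivially $\ell$-regular). For the converse, though, your plan diverges from the paper's and has a real gap. You propose to directly construct an ambient bilipschitz $\Phi$ zone by zone, flattening each $FL$ zone by a vertical translation and each $FI$/$FD$ zone by a $y$-dependent shear, then gluing via a partition of unity. The paper does not do this, and for good reason: a zone-by-zone flattening is only possible when the flanking flat zones are ``wide enough'' relative to the height of the intervening $FI$/$FD$ block. This is precisely the hypothesis in Proposition~\ref{wellseparated} (well-separatedness), a modification of Theorem~4.2 of \cite{OW1}. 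If a triple is \emph{not} well-separated, no shear or translation will flatten it while staying bilipschitz with compact support matching the neighbors; and indeed Proposition~\ref{notregular} (a modification of Theorem~4.1 of \cite{OW1}) shows that certain non-well-separated configurations force $X$ to fail $\ell$-regularity outright. Your proposal collapses this dichotomy by assuming every $FI$/$FD$ zone admits a flattening shear; you acknowledge the need for a ``quantitative comparison between $w(A)$ and $h(A)$'' from normal embedding, but do not explain how to get it, and in fact the relevant comparison is exactly the content of Propositions~\ref{notregular} and~\ref{wellseparated}, which your outline leaves as an unproved black box. Closedness of the $FL$ zones enters the paper not (as you suggest) to give definite boundary arcs for a tubular gluing, but through Lemma~\ref{openclosed}, to control how widths and heights propagate when adjacent zones are amalgamated during the reduction.

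The paper's actual converse is a reduction-plus-contradiction argument, not a direct construction. It iteratively replaces every well-separated triple by a single flat zone (each step being semialgebraic bilipschitz with compact support, agreeing with the identity near the boundary arcs --- this is why gluing works, not a partition of unity, which does not in general preserve bilipschitz constants), until no well-separated triple remains. Then, assuming $X$ is $\ell$-regular, a combinatorial argument tracking the widths $w(F_j)$ and heights $h(I_j), h(D_j)$ shows the NWSR state forces $w(F_{\mathrm{last}})<1$, contradicting $w(F_{\mathrm{last}})=1$; hence the reduction must terminate in a single flat zone and $X$ is bilipschitz to $W$. Your proposal is missing this iterative reduction, the dichotomy between well-separated and non-well-separated triples, and the width/height bookkeeping that closes the argument; without them, the shear-and-glue plan cannot be made to work.
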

\begin{remark} This is conjecture 4.3 of \cite{OW1}, except we don't assume flat regions are closed there. \end{remark}
\begin{proof}
If $X$ is outer semialgebraic bilipschitz to $W$, then it is immediate that $X$ is 1-regular since $W$ is.

For the converse, assume $X$ is 1-regular. Then so is its restriction $X =\Gamma f$ over $W=\{ |x| \le ky, y \ge 0\}$ with the positive $y$-axis the only
exceptional ray.  The domain is partitioned into maximal connected $FI, FD$ and $FL$ regions which we list from left to right starting with $F_{\rm first}$ flat with $w(F_{\rm first}) = 1$ and ending with flat $F_{\rm last}$ with $w(F_{\rm last}) = 1$.  Next after $F_{\rm first}$ is a region $I_1$ (say, without loss of generality, $FI$), and then alternating $FL$ and $FI$:
$$
F_{\rm first}, I_1, F_1, I_2, F_2, \ldots, I_r, F_r\quad r\ge 1
$$
and either $r = {\rm last}$ or $F_r$ is followed by 
$$
D_1, F_{r+1}, D_2, \ldots, D_s,  F_{r + s}, {\rm etc.}  
$$
Here the $I$'s are $FI$ and $D$'s are $FD$.
 \newline
 
 By a modification of Theorem 4.1 of \cite{OW1}, we have:
\begin{proposition}\label{notregular}  Assume three successive regions $A,B,C$ are such that  $A$ is FI (resp. FD), $C$ is FD (resp. FI) and $B$ is either FI or FD or a succession of alternating flat and FI or FD regions. Suppose $h(A)> w(B)$ and $h(C)> w(B)$.  Then $\Gamma f|_{(A\cup B\cup C)}$ is not 1-regular.
\end{proposition}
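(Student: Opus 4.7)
The plan is to prove Proposition~\ref{notregular} by contradiction: assume $X := \Gamma f|_{A\cup B\cup C}$ is 1-regular, with $d_i \le K\,d_o$, and exhibit a one-parameter family of points $(P(y), Q(y))$ on $X$ whose ratio $d_i(P(y), Q(y))/d_o(P(y), Q(y))$ diverges as $y\to 0^+$. For the construction, let $\alpha_0$ be the right boundary arc of $A$ and $\gamma_0$ the left boundary arc of $C$; since $A$ is FI and $C$ is FD, $f(\cdot, y)$ attains its maximum on $A$ at $\alpha_0(y)$ and on $C$ at $\gamma_0(y)$. Write $M_A(y) := f(\alpha_0(y), y)$ and $M_C(y) := f(\gamma_0(y), y)$. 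Choose $\sigma \in (1/h(A), 1/w(B))$, nonempty since $h(A) > w(B)$. By the definition of $h(A)$ and continuity, there is an arc $\alpha \in A$ with $\alpha < \alpha_0$ and $M_A(y) - f(\alpha(y), y) \asymp y^\sigma$, and likewise an arc $\gamma \in C$ with $\gamma > \gamma_0$ and $M_C(y) - f(\gamma(y), y) \asymp y^{\sigma'}$ for some $\sigma' \in (1/h(C), 1/w(B))$. After possibly swapping the roles of $A, C$ (so that $M_A \ge M_C$) and tuning $\sigma, \sigma'$, arrange $f(\alpha(y), y) = f(\gamma(y), y) =: v(y)$; set $P(y) := (\alpha(y), y, v(y))$ and $Q(y) := (\gamma(y), y, v(y))$ on $X$.

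Next I bound the outer and inner distances. Since $P, Q$ share $y$- and $z$-coordinates, $d_o(P, Q) = |\gamma(y) - \alpha(y)| \le |\alpha_0 - \alpha| + |\gamma_0 - \alpha_0| + |\gamma - \gamma_0|$; the middle term is $\asymp y^{1/w(B)}$ by the definition of $w(B)$, and the outer terms are $\lesssim y^{1/w(B)}$ by the mean-value estimate of Proposition 6.4 (since $f_x \to \infty$ on $A, C$, a prescribed $f$-drop of order $y^\sigma$ requires an $x$-displacement of order $y^{\sigma + \tau}$ with $\tau > 0$, which can be made $\le y^{1/w(B)}$ by taking $\sigma$ close enough to $1/w(B)$). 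Hence $d_o(P, Q) \lesssim y^{1/w(B)}$. For the inner distance, every rectifiable path $\pi$ on $X$ from $P$ to $Q$ projects to a continuous curve in $\R^2$ from $(\alpha(y), y)$ to $(\gamma(y), y)$ which, staying inside $A\cup B\cup C$, must cross both $\alpha_0$ and $\gamma_0$. Two cases: (a) if $\pi$ stays inside the strip $y' \in [y/2, 2y]$, then at each crossing the $z$-coordinate takes the value $M_A(y(t_i))$ or $M_C(y(t_j))$, and since the tangent-plane condition forces $\mathcal{O}(M_A), \mathcal{O}(M_C) > 1$ in $y$, an optimization balancing $y$-variation against $z$-variation yields $|\pi| \gtrsim y^\sigma$; (b) if $\pi$ leaves the strip, its $y$-variation alone contributes $\ge y$ to $|\pi|$. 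Combining, $d_i(P(y), Q(y)) \gtrsim \min(y^\sigma, y) = y^{\min(\sigma, 1)}$.

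By Proposition 6.5 ($h(W) < 1$) and monotonicity of $h$ on subsets ($A \subset W$ gives $h(A) \le h(W)$), one has $h(A) < 1$, so combined with $h(A) > w(B)$ this yields $w(B) < 1$ and hence $1/w(B) > 1 > \min(\sigma, 1)$. Therefore
\[
\frac{d_i(P(y), Q(y))}{d_o(P(y), Q(y))} \;\gtrsim\; y^{\min(\sigma, 1) - 1/w(B)} \;\to\; +\infty \quad\text{as } y\to 0^+,
\]
contradicting 1-regularity of $X$. The main technical obstacle is the inner-distance lower bound in case~(a): one must rule out shortcut paths that dip slightly in $y$ so as to bring $M_A(y(t_1))$ down to $v(y)$ (and similarly at the $\gamma_0$-crossing), thereby shrinking the $z$-excursion. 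The resolution is the quantitative balance hinted at above: because $\mu_A := \mathcal{O}(M_A) > 1$, a $y$-dip large enough to cancel the $z$-excursion of order $y^\sigma$ must itself be of order $y^{\sigma - \mu_A + 1}$, contributing $y$-variation strictly exceeding $y^\sigma$ (since $\mu_A > 1$ forces $\sigma - \mu_A + 1 < \sigma$). The formal argument combines the mean-value estimate with the semialgebraic structure of $\alpha_0, \gamma_0$ and of $M_A, M_C$.
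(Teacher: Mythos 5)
Your strategy---exhibit a one-parameter family of point pairs on the graph whose inner-to-outer distance ratio blows up---is the same idea underlying the proof of Theorem 4.1 of \cite{OW1}, which is what the paper actually invokes (after using Lemma~\ref{openclosed} to relate $w(B)$ to $w(A\cup B\cup C)$). So the approach is correct in spirit, but your reconstruction has several genuine gaps.

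First, the construction of the matched pair $(\alpha,\gamma)$ with $f(\alpha(y),y)=f(\gamma(y),y)$ is not justified. You assert that after ``tuning $\sigma,\sigma'$'' one can arrange a common value $v(y)$, but this silently assumes that the ranges of $f(\cdot,y)$ on $A$ and on $C$ overlap at the desired depth. If $M_A(y)-M_C(y)$ has order smaller than $1/h(C)$, for example, there may be no arc $\gamma\in C$ with $f(\gamma(y),y)=v(y)$ for $v$ near $M_A$; and if its order is small relative to $1/w(B)$, one must check that the resulting $\alpha$ does not drift too far from $\alpha_0$, ruining the outer-distance bound. This needs an explicit argument, not a parenthetical ``tuning.''

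Second, the outer-distance estimate $d_o\lesssim y^{1/w(B)}$ is not established by your reasoning. You argue that on the $FI$ region a $z$-drop of order $y^\sigma$ forces an $x$-displacement of order $y^{\sigma+\tau}$ with $\tau>0$, and that ``taking $\sigma$ close enough to $1/w(B)$'' makes $\sigma+\tau\ge 1/w(B)$. But $\tau$ depends on $\sigma$ and there is no a priori lower bound on it as $\sigma\to 1/w(B)^-$; $\sigma+\tau$ could stay strictly below $1/w(B)$. The clean fix is to go the other way: choose $\alpha$ by prescribing $\mathcal{O}(\alpha-\alpha_0)=\max(1/w(A),1/w(B))$ directly (so $|\alpha-\alpha_0|\lesssim y^{1/w(B)}$ automatically), and then deduce from the $FI$ property that $\sigma=\mathcal{O}(M_A-f(\alpha))<1/w(B)$, which is the only feature of $\sigma$ you actually use.

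Third, your displayed identity $\min(y^\sigma,y)=y^{\min(\sigma,1)}$ is false for $0<y<1$; it should be $y^{\max(\sigma,1)}$. Since Proposition 6.5 gives $\sigma>1$, the correct lower bound is $d_i\gtrsim y^\sigma$, not $\gtrsim y$, and the final exponent should be $\sigma-1/w(B)$ (still negative because $\sigma<1/w(B)$). As written, your conclusion is ``fortunately'' still negative, but the intermediate inequality $d_i\gtrsim y$ is not justified by your argument and is in fact stronger than what the optimization in case (a) yields. Finally, the inner-distance lower bound itself (case (a) optimization, ruling out shortcut $y$-dips) is only sketched; that is the real content of Theorem 4.1 of \cite{OW1}, and an examiner would expect it to be carried out or explicitly cited.
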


Here we use Lemma \ref{openclosed} to conclude that $h(A ) >w(A\cup B\cup C)$ and   $h( C ) >w(A\cup B\cup C)$, and then apply the proof of Theorem 4.1 of \cite{OW1}.
\newline

By a modification of Theorem 4.2 of \cite{OW1}, we have:

\begin{proposition} \label{wellseparated} Assume three successive regions $A,B,C$ are such that $A$ and $C$ are flat and $B$ is either FI or FD or a succession of alternating flat and FI or flat and FD regions. Suppose $w(A)\ge h(B)$ and $w(C)\ge h(B)$.  Such an $A,B,C$ is called {\rm well-separated}.%change
Then there is a semialgebraic bilipschitz transformation of $\Gamma f|_{(A\cup B\cup C)}$ onto a flat $D$ (the linearization of $\Gamma f|_{(A\cup B\cup C)}$) which agree along their boundary, and which is such that %change
the support of the transformation lies in a compact region $K$ containing both of these sets, and $\pi K=A\cup B\cup C$, where $\pi$ is orthogonal projection to the $xy$-plane.
\end{proposition}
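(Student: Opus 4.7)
The plan is to construct an ambient semialgebraic bilipschitz homeomorphism $\Phi\colon\R^3\to\R^3$, equal to the identity outside a compact tubular region $K$ with $\pi K=A\cup B\cup C$, which carries $\Gamma f|_{A\cup B\cup C}$ onto the flat surface $D$ described below. A purely vertical shear $(x,y,z)\mapsto(x,y,z+g(x,y))$ does not suffice, because $f_x$ may be unbounded (even $+\infty$) on $B$; so $\Phi$ must combine vertical displacement with a horizontal ``sliding'' supported in $K$.

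Using Lemma 5.1, I would first extract a common finite Puiseux prefix for the arcs of $A\cup B\cup C$, and let $\alpha(y)<\gamma(y)$ denote the outermost boundary arcs. Define the linearization
\[
\tilde f(x,y)=f(\alpha(y),y)+\frac{f(\gamma(y),y)-f(\alpha(y),y)}{\gamma(y)-\alpha(y)}\bigl(x-\alpha(y)\bigr)
\]
on $\alpha(y)\le x\le\gamma(y)$, and set $D=\Gamma\tilde f$. By the definition of $h(B)$ together with Proposition 6.5 applied to the flat zones $A$ and $C$, the numerator above is of order $y^{1/h(B)}$, while the denominator is at least of order $\max\bigl(y^{1/w(A)},y^{1/w(C)}\bigr)$. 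The well-separation hypothesis $w(A),w(C)\ge h(B)$ gives $y^{1/w(A)},y^{1/w(C)}\ge y^{1/h(B)}$ near $0$, so the $x$-slope of $\tilde f$ is uniformly bounded and $D$ is flat. By construction $\tilde f=f$ along the outer boundary arcs $\alpha$ and $\gamma$, so $\Gamma f|_{A\cup B\cup C}$ and $D$ agree along their common boundary.

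To define $\Phi$, I would take $K$ to be a semialgebraic tubular neighborhood of both surfaces whose vertical thickness at height $y$ is comparable to $y^{1/h(B)}$ and with $\pi K=A\cup B\cup C$, and choose a semialgebraic, compactly supported vector field on $K$ whose time-$1$ flow sends $(x,y,f(x,y))$ to $(x,y,\tilde f(x,y))$ while fixing $\partial K$; extend by the identity outside $K$. The main obstacle, and the technical heart of the proof, is to verify that $\Phi$ and $\Phi^{-1}$ are uniformly Lipschitz near $0$. In Puiseux terms, the required estimates reduce to bounding two ratios by $O(1)$: (surface displacement)/(tube thickness), and (surface displacement)/(lateral room in $A$ and $C$). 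Each such ratio is a positive power of $y$ whose exponent is of the form $1/w(\cdot)-1/h(B)\ge 0$ by well-separation, so both are bounded. Control of the $y$-derivatives comes from differentiating the explicit Puiseux expansions of $\alpha$, $\gamma$, $f\circ\alpha$, and $f\circ\gamma$ provided by Lemma 5.1; the same inequalities keep the resulting exponents nonnegative. With these estimates in hand, $\Phi$ is the desired semialgebraic ambient bilipschitz transformation carrying $\Gamma f|_{A\cup B\cup C}$ onto $D$, with support in $K$.
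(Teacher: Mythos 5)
Your proposal correctly identifies the central obstruction—a vertical shear $(x,y,z)\mapsto(x,y,z+g(x,y))$ cannot be Lipschitz when $f_x\to\pm\infty$ inside $B$—but then the actual construction of the bilipschitz transformation is left as a sketch, and this is precisely the technical content the paper does \emph{not} reprove. The paper's argument is short because it is a reduction: using the closedness of the flat zones $A$ and $C$ (established in Section~8), it splits $A=A_1\cup A_2$ and $C=C_1\cup C_2$ into halves of equal width, and then invokes Theorem~4.2 of \cite{OW1} applied to the inner triple $A_2,B,C_1$, producing a transformation whose compact support lies over $A_2\cup B\cup C_1$ and which is therefore the identity over $A_1$ and $C_2$. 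That last feature is what permits the flattenings of successive well-separated triples to be composed in the proof of Theorem~6.6. Your ``choose a semialgebraic compactly supported vector field whose time-$1$ flow sends $(x,y,f(x,y))$ to $(x,y,\tilde f(x,y))$'' does not explain how the flow is chosen so as to (i) remain well defined and uniformly bilipschitz across arcs in $B$ where $f_x=\pm\infty$, and (ii) taper to the identity near $\pi^{-1}(\alpha)$ and $\pi^{-1}(\gamma)$; the two $O(1)$ ratios you mention are plausible but are asserted, not verified. You should either actually construct this flow with the Puiseux estimates worked out, or invoke \cite{OW1}, Theorem~4.2 as the paper does; at present this is a genuine gap.

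A smaller inaccuracy: you state that the numerator $f(\gamma(y),y)-f(\alpha(y),y)$ is of order $y^{1/h(B)}$. Since $\alpha$ and $\gamma$ are the outer boundary arcs of $A\cup B\cup C$, they lie in $A$ and $C$, not $B$, and the relevant lower bound on the order of the numerator is $1/h(A\cup B\cup C)=1/\max(h(A),h(B),h(C))$, which may be smaller than $1/h(B)$. The conclusion that $D$ is flat survives, but one must use Proposition~6.5 (giving $h(A)\le w(A)$ and $h(C)\le w(C)$ because $A,C$ are flat) together with the hypothesis $h(B)\le w(A),\,w(C)$, to get $h(A\cup B\cup C)\le\max(w(A),w(C))\le w(A\cup B\cup C)$, hence a nonnegative exponent for $\tilde f_x$. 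As written, your argument ignores the contributions of $h(A)$ and $h(C)$.
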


Since $A$ is flat, hence closed, it can be split into two consecutive flat regions $A=A_1\cup A_2$ with $w(A_1)=w(A_2)=W(A)$, and similarly for $C$. The argument from Theorem 4.2 of \cite{OW1} can be applied to $A_2,B,C_1$, 
and the support of the semialgebraic bilipschitz transformation is compact and lies over $A_2,B,C_1$.  So $\Gamma f$ is unchanged over $A_1$ and $A_3$.  This will allow us to piece together the transformations created for each successive region.

By this proposition any well-separated region can be replaced by a single flat region.  This process can be continued until  there are no well-separated regions (NWSR).  Assume that is the case.

If $X$ is  not 1-regular, then it is not bilipschitz to $W$.  So assume $X$ is 1-regular.  
Setting $A_1 = \{ I_1, F_1, I_2, F_2, \ldots, I_r\},$ note that 
\begin{eqnarray*}
h(A_1) &=& \max\{h(I_1), \ldots, h(I_r), h(F_1), \ldots, h(F_r)\}\\
w(A_1) &=& \max\{w(I_1), \ldots w(I_r), h(F_1), \ldots, h(F_r)\},
\end{eqnarray*} 
and, so on, mutatis mutandis, for $h(A_i)$ and $w(A_i)$.  

By our NWSR assumption there must be an  $I_{r_0}$, $1\le {r_0}\le r$, 
with $h(I_{r_0})>w(F_r)$, and we assume this is the last such $I_{r_0}$ 
(i.e. $h(I_j)\le w(F_r)$ for ${r_0}<j\le r$ ).   Then $h(I_{r_0})> w(F_{r_0}\cup I_{{r_0}+1}\cup \dots \cup F_r)=w(F_r)$. 
Suppose there is an $s_0$, $1\le {s}$, such that   
 $h(D_{s_0})>w(F_r)$, and we assume this is the first such $I_{s_0}$ 
 (i.e. $h(D_j)\le w(F_r)$ for $1\le j <s_0$ ).  Arguing as above, 
 $h(D_{s_0})> w(F_r)=$ the width of the union of zones between 
 $I_{r_0}$ and $D_{s_0}$.  By  Proposition \ref{notregular} 
 this violates the assumption that $X$ is 1-regular.  
 Thus  $h(D_j)\le w(F_r)$ for $j=1, \ldots, s$. 
 By NWSR $w(F_{r+s})< w(F_s)$.   
Continuing this process gives that $w(F_{last})<1)$.  
This contradicts $w(F_{last})=1$. So $\Gamma f|W$ must be 1-regular.
\end{proof} 

\begin{remark} In the theorem, the bilipschitz transformation has compact support. \end{remark}

\section{Examples}

Given an arc $\alpha(y)$, recall that $\mathcal{O}(f_x|_\alpha)$ is the order of $f_x|_\alpha$.
We will sometimes distinguish between two types of $FL$  zones:
\begin{enumerate}
	
	\item  $FL_1 = \{\alpha: f_x|_\alpha \rightarrow 0\ \  {\rm as}\ \ y \rightarrow 0\}=\{\alpha:\mathcal{O}(f_x|_\alpha)>0\} $
	
	\item $FL_2 = \{\alpha: f_x|_\alpha \rightarrow {\rm nonzero \ constant}\ \  {\rm as}\ \ y \rightarrow 0\}=\{\alpha:\mathcal{O}(f_x|_\alpha)=0\} .$
	
\end{enumerate}\noindent

{\bf Example 1}   (3.1 of \cite{OW1})

$$
z = f(x, y) = \frac{y^{2a+b}}{x^2 + y^{2a}}, \quad b\ge 2, \  \ a > b \ {\rm positive\ integers}.
$$
The positive $y$-axis is a full exceptional line.  

%\vskip .25in  {\bf XXXAdd picture??} 
\vskip .25in 

We have 
$$
f_x = \frac{\partial z}{\partial x} =  -\frac{y^{2a+b}\cdot 2x}{(x^2 + y^{2a})^2}\
$$
\noindent and consider the arcs 
$$x = \phi(y) = cy^s + {\rm h.o.t.} .$$
On these arcs
$$
f_x|_\phi = -y^{2a+b}(2cy^s + \ldots)/(c^2y^{2s} +\rm h.o.t. + y^{2a})^2.
$$

%$$
%O(z(y)) = O\left( \frac{y^{2a+b}}{x(y)^2 + y^{2a}}\right).
%$$

\noindent{\bf Case $0 < s < a$}: 

$$
f_x|_\phi = ky^{2a + b -3s} + \ldots \quad ({\rm where}\ k = - 2/c^3)
$$  
so
\begin{eqnarray*} 
f_x|_\phi &\rightarrow& 0 \ {\rm if}\   s < (2a + b)/3,\\ 
&\rightarrow& {\rm constant} \ne 0 \ {\rm if}\  s = (2a+b)/3, \\ 
&\rightarrow& \infty \ \ {\rm if}\   (2a+b)/3 < s < a.
\end{eqnarray*}

\noindent{\bf Case} $s = a$:

$$
f_x|_\phi = ky^{b - a < 0} + \ldots \rightarrow \infty \quad ({\rm where}\ k = - 2c/(c^2+1)^2).
$$  
%In this case, $(x(y), y, z(y))$ is a curve of inflection points on $X$  as one moves in the $x$-direction.  We have
%$$
%o(z) = y^b \qquad O\left(\frac{\Delta z}{\Delta x}\right) = b-a < 0
%$$
%so that the curve is steep.

\noindent{\bf Case} $a< s$:

$$f_x|_\phi = ky^{b - 2a + s} + \dots ({\rm where}\ k = - 2c)\ $$
so
\begin{eqnarray*}
f_x|_\phi &\rightarrow& \infty   \ {\rm if}\  a< s < 2a- b \\ 
&\rightarrow& {\rm constant} \ne 0 \ {\rm if}\  s = 2a- b \\ 
&\rightarrow& 0 \ {\rm if} \ 2a-b < s.
\end{eqnarray*}

So, in the first quadrant (the second quadrant is similar), the $FL_1$ zone consists of a zone of arcs 
$$\{\phi =  cy^s + {\rm h.o.t.}, {\rm for\ all}\ c>0\  {\rm and}\  0<s< \frac{2a + b}{3}\}$$   which is closed of width 1, and a zone  of arcs $$\{\phi =  cy^s + {\rm h.o.t.}, {\rm for\ all}\ c>0\  {\rm and}\ 2a -b < s \}$$  which has width $\frac{1}{2a-b}$ but which is open (note when $c<0$ the arc is in the second quadrant). 

The $FL_2$ zone consists of a zone of arcs of the form 
$$\{\phi =  cy^s + {\rm h.o.t.}, {\rm for\ all}\ c>0\  {\rm and}\ s = \frac{2a + b}{3}\}$$ 
and another of the form
$$\{\phi =  cy^s + {\rm h.o.t.}, {\rm for\ all}\ c>0\  {\rm and}\ s = 2a - b\}.$$
The former has width  $\frac{3}{2a + b}$ and the latter $\frac{1}{2a - b}$, and
both are closed.

\vskip .25in

The $FD$ zone for $f$ in the first quadrant is 
$$
x= cy^s + \ldots \quad {\rm with}\ c>0 \quad \frac{2a+b}{3} < s < 2a - b
$$ and has width $\frac{3}{2a + b}$.
Then
$$
z = y^{2a + b}/(c^2y^{2s} + \ldots + y^{2a})
$$
When $s = (2a+b)/3$
\begin{eqnarray*}
z &=& y^{2a+b}/(b^2y^{(4a + 2b)/3} + \ldots )\\
&=&\frac{1}{b^2}y^{(2a+b)/3} + \ldots,\\
{\rm so}\quad  \mathcal{O}(z) & = &  (2a+b)/3.
\end{eqnarray*}
When $s = 2a -b$
\begin{eqnarray*}
z &=& y^{2a+b}/(y^{2a} + \ldots)\\
&=& y^b + \ldots\\
{\rm so} \quad  \mathcal{O}(z) &=& b.
\end{eqnarray*}  

We have 
$$
\Delta z =y^b - \frac{1}{b^2}y^{(2a+b)/3} + \ldots \approx y^b + \ldots,
$$
so $h(FD) = \frac{1}{b}$.

\vskip .15in

Going to the right from the $y$-axis (so that $s$ decreases), the zones in the open first quadrant are 
$$
FL_1, FL_2, FD, FL_2, FL_1.
$$

The corresponding ranges for $s$ decrease as follows
$$
\{s>2a-b\}, \{s=2a-b\}, \{2a-b>s>\frac{2a+b}{3}\}, \{s = \frac{2a+b}{3} \}, \{s < \frac{2a+b}{3} \}
$$
and the corresponding widths of the zones are
$$
\frac{1}{2a-b}, \frac{1}{2a-b}, \frac{3}{2a+b}, \frac{3}{2a+b}, 1.
$$ 
Combining the first two zones gives $w(FL) =\frac{1}{2a-b}$ for the first flat zone, followed by a $FD$ zone with height
$h(FD) = \frac{1}{b}$ (and $w(FD) = \frac{1}{\frac{3}{2a+b}}$), which is followed by a third zone, which is $FL$ and which has width $w(FL) = 1$, obtained by amalgamating the last two zones.  So, going right from the $y$-axis, we have three zones
$$FL, FD, FL$$
with
$$w(FL) = \frac{1}{2a-b},\quad h(FD) = \frac{1}{b},\quad w(FL) = 1,$$
respectively.  

Finally, we combine with the second quadrant to get a zone of width $\frac{3}{2a+b}$ containing successive $FI$, $FL$ and $FD$ regions with height 
$\frac{1}{b}$ strictly greater than the width $\frac{3}{2a+b}$.  Thus the graph is not 1-regular and therefore not bilipschitz to $\R^2$.

 We can represent this example (in the first quadrant) with a graph and an order function. The graph consists of two edges, $(0,a)$ and $(a,\infty)$ with each point representing a power $s$, and with a single vertex at $s=a$. Each point $s$ represents the zone $\{\phi =  cy^s + {\rm h.o.t.}\}$, $c>0$.  The order function $\mathcal{O}(f_x|\phi)$ is independent of $c$ and the higher order terms.  We can write it as  $\mathcal{O}(s)= 2a+b-3s$ on  $(0,a)$ and $\mathcal{O}(s)= b-2a+s$ on $(a,\infty)$.  Note both these affine functions equal $\mathcal{O}(a)=b-a$, 
 so $\mathcal{O}(s)$ is continuous and piecewise affine.
 
 The only ``exceptional power'' is the vertex $s=a$ where the two affine functions meet.  The exceptional power $s=a$ has no ``exceptional coefficient" because the order function does not depend on $c$. 
 
 Note that $f_x(y)=0$ on the positive $y$-axis, which we can represent on the graph by $s=+\infty$, and in this case we consider $\mathcal{O}(f_x|\phi) =+\infty$.  We can also extend to the second quadrant in which the arcs have $c<0$, yielding the graph with 4 edges and the vertices $s= a (c>0)$, $s= a (c<0)$, and $s= +\infty$.  We can add vertices representing the  positive and negative $x$-axes,
 and extend around the circle using the above zones with $y<0$.  Observe in this case that the graph has eight edges and eight vertices and the order function is continuous and piecewise affine.  The FL zones represent the connected components of  $\mathcal{O}(s)\ge 0$, which are closed subsets of the graph, and hence are closed zones.
 
% Note that $\mathcal{O}(s)=0$ at $(2a+b)/3$ and at $2a-b$, and so the flat zones are represented by intervals $0<s\le (2a+b)/3$ and $2a-b\le s <\infty$, which are closed in the domain $0<s<\infty$.  The width of each zone is the maximum of $1/s$ on the interval (or 1 if there is an $s<1)$, so it is 1 and $1/(2a-b)$ which are realized respectively by arcs $x=cy$ for all $c\ne 0$ and by $x=cy^{2a-b}$ for all $c\ne 0$, so these zones are closed.

%{\bf PICTURE???}
%\newpage

\vskip .25in

\noindent{\bf Example 2} (3.2 of \cite{OW1})

$$
z^3 = (x^2 + y^2)(x^2 - y^3) = x^4 + x^2( y^2 - y^3) - y^5.
$$
\smallskip
\noindent Set $f(x,y) = z^{1/3}$ and restrict to $\{ y > 0 \cup (0,0) \}$.

\noindent Let $x = cy^s + \ldots,\  s>0,$  so that
\begin{eqnarray*}
z^3 &=& c^4y^{4s} + c^2y^{2s}(y^2 - y^3) - y^5 + \ldots,\\
&=& c^4y^{4s} + c^2y^{2s+2} - y^5 + \ldots
\end{eqnarray*}

%{\bf PICTURE??}

\noindent The lowest order term $(c\ne 0)$ is
\begin{eqnarray*}
{\rm I.} &&c^4y^{4s} \quad {\rm if}\quad s < 1\\ 
{\rm II.}&&c^2y^{2s+2}\quad {\rm if}\quad  1 \le s \le 3/2\\
{\rm III.}&& -y^5\quad {\rm if}\quad  3/2 < s\\
\end{eqnarray*}
\noindent which gives the following cases:
\begin{eqnarray*}
{\rm I.}&& z= c^{4/3}y^{4s/3} + \ldots  \quad {\rm if}\quad s < 1\\
{\rm II.}&&z= c^{4/3}y^{(2s+2)/3} + \ldots \quad {\rm if}\quad  1 \le s \le 3/2 \\
{\rm III.}&&z = -y^{5/3} +\ldots \quad {\rm if}\quad  3/2 < s \\
{\rm IV.}&&z = (c^4 + c^2)^{1/3}y^{4/3}+\ldots\quad{\rm if}\quad s =1\\
{\rm V.}&&z = (c^2 - 1)^{1/3}y^{5/3} +\ldots\quad{\rm if}\quad 3/2 = s.
\end{eqnarray*} 

\noindent Differentiating gives
\begin{eqnarray*}
3z^2f_x &=& 4x^3 +2x(y^2 - y^3)\\
f_x &=& (4x^3 +2x(y^2 - y^3))/3z^2
\end{eqnarray*}
\noindent and we obtain the following where $s$ ranges as above.
\vskip .25in

Restrict to the first quadrant for simplicity, the ``first level'' of the tree consists of the three edges $0<s\le 1$, $1\le s\le 3/2$ and $3/2 \le s<\infty$; the order function on these edges is respectively $s/3$, $(2-s)/3$ and $s-4/3$. Note these agree at the vertices $\mathcal{O}(1)=1/3$ and $\mathcal{O}(3/2)=1/6$. The exceptional $s=3/2$ has an associated exceptional coefficient, namely $c=1$ (see case V above). In that case we have to calculate the order function for the zone
\begin{eqnarray*}
x&=& \phi(y) = y^{3/2} + by^s + \ldots {\rm where}\ s > 3/2\\
\ \ {\rm then} &&\\ 
z^3 &=& ((y^{3/2} + by^s +\ldots)^2 + y^2)((y^{3/2} + by^s+\ldots)^2 - y^3)\\
&=& (y^2 + \ldots)(y^3 + 2by^{3/2 + s} + b^2y^{2s} + \ldots -y^3)\\
&=& 2by^{s + 7/2} + \ldots\\ 
\ \ {\rm and} &&\\
z&=& (2b)^{1/3}y^{s/3 +7/6}+ \ldots
\end{eqnarray*}

\begin{eqnarray*}
{\rm Thus}\ \ 3z^2\frac{\partial z}{\partial x} &=& 4x^3 +2x(y^2 - y^3)\quad {\rm and}\ \ x = y^{3/2} + by^s + \ldots\\
&=&4(y^{9/2}+3y^{6/2}by^s + 3y^{3/2}b^2y^{2S} + b^3y^{3s}) + 2(y^{7/2} + \ldots)\\
&=&2y^{7/2} + \ldots
\end{eqnarray*}
\begin{eqnarray*}
{\rm whence}\ \  f_x = \frac{\partial z}{\partial x} 
  &=& C(y^{7/2} + \ldots)/(y^{s/3 +7/6}+\ldots )^{2}   {\rm \ for\  some \ constant\ 
   } C\\
  &=& C(y^{7/2-2s/3 - 7/3}) +\ldots\\
  &=& Cy^{7/6 - 2s/3} + \ldots .
\end{eqnarray*}
So, we have shown that $f_x|_\phi = Cy^{7/6 - 2s/3} + \ldots \ \ {\rm when} \ \  s > 3/2$.  
 This gives rise to a second level to the tree representing $x=y^{3/2}+by^{s'} + h.o.t.$ by the line $3/2 <s'<\infty$.  On this line the order of $f_x$ is $\mathcal{O}(s')=(7-4s')/6$ for all $s' >3/2 $.  Note that this gives $\mathcal{O}(3/2) = 1/6$ so it is continuous with the order function on the first level. This second level line has no exceptional $s'$, it is actually an edge; so there are 3 edges on the first level and 1 more edge on the second level, which connects to the first level at the vertex $s=3/2$. This extends to the second quadrant by symmetry. Note $\mathcal{O}(s)>0$ on all of the first level, and $\mathcal{O}(s')\ge 0$ for $2/3 < s' \le 7/4$, so these combine to form four flat zones (two in each quadrant).  They are separated by the four open zones $x=\pm y^{3/2}+by^{s'} + h.o.t.$, $s'>7/4$, $b\ne 0$ (note these FI and FD zones surround the curve $x=\pm y^{3/2}$ which is $f=0$).
\smallskip

It is instructive to consider a much more general example of the type in Examples 1 and 2:

{\bf Example 3} 
Let 
\begin{eqnarray*}
z^n = \frac{p(x,y)}{q(x,y)}, &&{\rm with\ }n\ {\rm odd}\ ,\ p, q \in \R[x,y] {\rm \ polynomials},\\ 
&&{\rm and \ } q \ne 0\ {\rm except\ possibly\ at\ } \0.
\end{eqnarray*}

This includes both Examples 1 and 2.  

Take the partial derivative with respect to $x$ of both sides of our model to obtain
$$
nz^{n-1}z_x = \frac{p_xq - q_xp}{q^2}.
$$  
We conclude that 
$$
z_x =\left(\frac{f(x,y)}{g(x,y)}\right)^\frac{1}{n}\ {\rm for\ polynomials\ } f, g.
$$
We will substitute the arc 
$$
x = ay^s + h.o.t., \quad a \ne 0
$$
into the preceding expression for $z_x$ and compare the orders $\mathcal{O}(z_x)$ to  $\mathcal{O}(x) = s$.

Write
$$ f(x,y) = c_1x^{r_1}y^{s_1} + \ldots +c_mx^{r_m}y^{s_m}.$$
Then
\begin{eqnarray*} 
f(ay^s + h.o.t., y) &=& c_1a^{r_1}y^{sr_1+s_1} + h.o.t. +\ldots +c_ma^{r_m}y^{sr_m+s_m} + h.o.t. \\
\mathcal{O}(f(ay^s + h.o.t., y)) &=&\min(sr_1+s_1, \ldots, sr_m+s_m)
\end{eqnarray*}
\noindent  The subscript yielding the minimum may be different for different values of $s$, but for a fixed subscript the value is independent of $a$ and of the $h.o.t.$.  There may be two or more terms giving the same order for $s$, and the lowest order terms of the same order may cancel, but only for finitely many $s$.  The same is true for the denominator, hence for the quotient, hence for the $n^{th}$ root of the quotient.  

So, for all but finitely many $s$, the zones $Z^+_s = \{ ay^s + h.o.t., a> 0\}$ and $Z^-_s = \{ ay^s + h.o.t., a< 0\}$ will yield 
$$\mathcal{O}_s = \mathcal{O}(z_x(Z^{\pm}_s)).$$  
\noindent Call these the ``generic'' $s$.  The other $s$ are ``exceptional''.
Note that
\begin{eqnarray*}
FL_1 &=& \{ {\rm all\ arcs\ } \gamma\ {\rm with\ } \mathcal{O}(z_x(\gamma)) > 0\}\\
FL_2 &=& \{ {\rm all\ arcs\ } \gamma\ {\rm with\ } \mathcal{O}(z_x(\gamma)) = 0\}\\
FL &=& \{ {\rm all\ arcs\ } \gamma\ {\rm with\ } \mathcal{O}(z_x(\gamma)) \ge 0\}\\
FI\cup FD &=& \{ {\rm all\ arcs\ } \gamma\ {\rm with\ } \mathcal{O}(z_x(\gamma)) < 0\}.
\end{eqnarray*}
Between two exceptional $s$'s the interval of generic $s$ involve the order of
$$\left( \frac{c_ia^{r_i}y^{sr_i +s_i}}{c'_j a^{r_j'} y^{sr_j'+s_j'}} \right)^{\frac{1}{n}}$$
\noindent where the denominator comes from $g$ and some $i,j$. The displayed order 
is $$\frac{1}{n}((sr_i + s_i) - (sr_j' + s_j'))$$
\noindent and, in particular, affine in $s$.  So, there is at most one $s$ in this interval with $Z^{\pm}_s$ having order 0.  
This $Z^{\pm}_s$ is the only pair of $FL_2$ zones in this interval and they are closed.  

What happens at an exceptional $s_0$?

Then there are two or more terms in $f$ or in $g$ (or both) of the same minimal order, which may have cancellation thus increasing the order.  For example, suppose the numerator has minimal order terms
$$c_ia^{r_i}y^{s_0 r_i + s_i},\ \ c_ja^{r_j}y^{s_0 r_j+ s_j}\ \ ,c_ka^{r_k}y^{s_0 r_k + s_k}$$
of order
$$p=s_0r_i + s_i = s_0 r_j + s_j = s_0 r_k + s_k$$
and sum
$$(c_ia^{r_i} + c_ja^{r_j} + c_ka^{r_k})y^p.$$
Cancellation occurs if $a$ is a nonzero solution of the polynomial 
$$c_ia^{r_i} + c_ja^{r_j} + c_ka^{r_k} = 0.$$
Thus, there are only finitely many coefficients $a$ for  which the minimal terms cancel.  These are the ``exceptional" coefficients.  For all the ``generic" $a$, the set of all
$$\gamma_{s_o,a}: x = ay^{s_0} + h.o.t.$$
have a common order $\mathcal{O}(s_0)$ for $z_x(\gamma_{s_o,a})$ independent of $a$ (generic) and higher order terms.  Also
as $s \rightarrow s_0$ the order $\mathcal{O}(z_x(\gamma_s))\rightarrow \mathcal{O}(z_x(\gamma_{s_0},a)), a\ {\rm generic}.$

Now consider $a_0$ exceptional for $s_0$ exceptional.  The set of arcs $\{ a_0y^{s_0} + h.o.t.\}$ is an open zone, of order $s_0$, but the order of $z_x$ composed with these arcs will have different orders.  All these arcs are of the form
$$\gamma_{s_0,s',b}: x = a_0y^{s_0} + by^{s'} + h.o.t, \ s'> s_0.$$
The first term $x=a_0y^{s_0}$ substituted into $z_x$ will cause the lowest order terms in $z_x$ to cancel, but the next term $by^{s'}$ will cause the order of $z_x (\gamma_{s_0,s',b})$ to vary affinely toward $\mathcal{O}(z_x(\gamma_{s_0,a}))$ as $s\rightarrow s_0$ (see Example 2).  These orders will be independent of $b$ and the higher order terms.  As $s'$ increases, we may 
reach a new exceptional $s'$ at which the lowest terms cancel for exceptional choices of $b$, in which case we repeat the argument.   The next section will show what can happen in this case.

In general we will construct a tree of vertices and edges as follows.  For the first level of the tree, take the interval of values of $s$ (perhaps $0<s<\infty$ or $0<s\le 1$).  Each point $s$ on this interval represents the set of arcs $x=ay^s + h.o.t$, for all coefficients $a\ne 0$.  If $s$ is generic (not exceptional) then the order $O(f_x)$ will be independent of $a$ and $h.o.t$, and will be an affine function of $s$; it will also be independent of $a$ if $s$ is exceptional and $a$ varies over all generic (non-exceptional) coefficients for that $s$.  We let the  exceptional $s$ be the vertices, and the edges will be the connected components of the $s$ interval less the vertices. The ``order function" 
$\mathcal{O} (s)=\mathcal{O}(f_x(\gamma_{s,a}))$ will be affine on each edge and continuous at the vertices.  

All of the arcs are represented by the points on the tree constructed so far, except when we have an exceptional power $s_0$ with an exceptional coefficient $a_0$.  In that case the first level of the tree will be missing the arcs
$$\gamma_{s_0,s',b}: x = a_0y^{s_0} + by^{s'} + h.o.t, \ s'> s_0.$$
We connect to the point $s_0$ a line parametrized by $s'>s_0$; each point on this line represents the set of arcs $\gamma_{s_0,s',b}$ for all $b$.  Substituting into $f_x$ and computing the order we get that all but finitely many $s'$ are generic. The exceptional $s'$ represent vertices on this line with edges in between, forming the second level of the tree. 

For an exceptional $s'_0$ there can be finitely many exceptional coefficients; if $b_0$ is one such, then we need to add a new line connected to $s'_0$ with one point $s''$ representing the arcs $$\gamma_{s_0,s'_0,c}: x = a_0y^{s_0} + b_0y^{s'_0} +cy^{s''}+ h.o.t, \ s''> s'_0.$$  This line can then be broken down into part of the third level of the tree, and this process continues similarly.  Again see the next section for what can happen.

%
%The general case is like the examples; there a tree whose levels parametrize the collections of arcs   of the form
%$$a)\quad a_0y^{s_0} + a_1y^{s_1} + \ldots +a_ky^{s_k} + b y^s + h.o.t., \quad s_0<s_1< \ldots s_k < s, s\ {\rm generic},$$
%$$ s_0,\ldots, s_k \ a_1, \ldots a_k \ \ {\rm exceptional}\ {\rm (there\ may\ be\ none)},\ \ b\ {\rm arbitrary} $$
%\noindent\ \ b) \ \ same as above with  $s$  exceptional and $b$ generic.

Each point on the graph represents a continuum of arcs such that the first varying term of the arcs have the same order, the order reciprocal is the width of the continuum of arcs; so on the first level of the graph this is the order of the arcs, on the second level this is the order of the second term of the arcs, etc.. 

All of the $FL$ zones occur when $\mathcal{O}(z_x\circ \gamma) \ge 0$, and this order function is continuous and piecewise affine on the graph, and so these zones  represent closed subsets of the tree.  The width of each $FL$ zone is realized by the width represented by one point in the closed subset realizing the width of the zone.  So the zone is a closed zone. 

In conclusion, all $FL$ zones for this example are closed.  

\section{Proof that $FL$ zones are closed}
So far we have been using elementary methods.  Employing some more advanced methods, we can prove a more general result about the FL zones.  In particular, consider the subanalytic preparation theorem due  to Parusinski (\cite{P}, 2001) and Van den Dries and Speissegger (\cite{VS}, 2002) as stated in \cite{BFGG}.

\smallskip

\noindent{\bf Theorem 3.2 of \cite{BFGG}}.  Let $f:( \R^2,\bf{0}) \rightarrow (\R, 0)$ be a definable and continuous function (we'll use subanalytic and continuous).  There exists a finite decomposition  $\mathcal{C}$ of $\R^2$ as a germ at $\bf{0}$ and for each $T \in \mathcal{C}$ there exists an exponent $\lambda \in \mathbb{F}$ (we'll use $\mathbb{Q} = \{ {\rm rationals} \}$) and definable functions $\theta, a: (\R, 0) \rightarrow \R$ and $u:(\R^2, \0) \rightarrow \R$ such that for $(x, y)\in T$ we have
$$
f(x,y) = (x - \theta(y))^\lambda a(y)u(x,y),\quad  | u(x, y) - 1 |  < \frac{1}{2}.
$$
By refining if necessary, we can further require that the set $\{ x = \theta(y) \}$ is either outside $T$ or on its boundary.

Consider $T\in \mathcal{C}$ as above for a continuous, subanalytic germ $f: \R^2,\0 \rightarrow \R, \bf{0}$.  For symplicity assume $T$ is in the first quadrant with left boundary arc $\alpha(y) = \Sigma a_i y^{r_i}$ and right
boundary arc $\beta(y) = \Sigma b_iy^{t_i}$. Let $\lambda$ and $a(y)$ be as in the theorem, and let $A=\mathcal{O}(a(y))$.

Suppose that $\theta(y) = \Sigma\theta_iy^{\sigma_1}$ is also in the first quadrant, either equal to $\alpha (y)$ or to the left of it.  Let $x(y) = \Sigma c_iy^{s_i}$ be an arc in  $T$. 
Thus $\sigma_1\ge r_1 \ge s_1\ge t_1$. By the theorem we have 
\begin{eqnarray*}
	\mathcal{O}(f(x(y),y)) &=& \lambda \mathcal{O}(\theta (y)-x(y)) + A\\
	&=&\lambda s_1 + A\ {\rm unless}\ \mathcal{O}(\theta (y)-x(y))> s_1 {\rm \ in\ which\ case}\  \\
	&&\quad \ \sigma_1 = r_1 = s_1 \ {\rm and}\  \theta_1 = a_1 = c_1.\\ 
\end{eqnarray*}
So, in the general case, we have a first order edge parameterized by all $s_1$, $r_1 \le s_1 \le t_1$ and
$	\mathcal{O}(f(x(y),y)) = \lambda s_1 + A$.
If $\sigma_1 = r_1 = s_1$ and $\theta_1 = a_1 = c_1$, then 
$$x(y) = a_1y^{r_1} + c_2y^{s_2} + \ldots$$
and $\mathcal{O}(\theta- x) = \min (\theta_2, s_2) = s_2,\ r_2\le s_2\  ({\rm and\ when}\ r_2 = s_2, \ a_2 \le c_2 ).$
This corresponds to a new edge (second order) representing these arcs with $$\mathcal{O}(f(x(y), y)) = \lambda s_2 + A.$$

If in addition $\sigma_2 = r_2 = s_2$ and $\theta_2 = a_2 = c_2$, then
\begin{eqnarray*}
	x(y) &=& a_1y^{r_1} + a_2y^{s_2} + c_2y^{s_3} + \ldots\\
	\mathcal{O}(\theta- x)&=& \min (\theta_3, s_3) = s_3, r_3\le s_3\ {\rm etc.}
\end{eqnarray*}
So, we have a third order edge with $\mathcal{O}(f(x(y),y)) = \lambda s_3 + A.$ The number of such edges depends on how close $\theta$ is to $\alpha$\ ! 

Suppose $\mathcal{O}(\theta-\alpha ) = \omega > 0$.  Then, there is an $I$ so that 
\begin{eqnarray*}
	&&\sigma_1= s_1, \quad \theta_1 = a_1\\
	&&\vdots\\
	&& \sigma_I =s_I, \quad \theta_I = a_I. \quad s_1 < \ldots < s_I <\omega\\
	&&\qquad {\rm and}\ \ \omega = \min (\sigma_{I+1}, s_{I+1})\\
\end{eqnarray*}
The last edge (closest to $\alpha$) represents the arcs
$$a_1y^{r_1} + \ldots +a_Iy^{r_I} + cy^s + \ldots,\, r_I < s.$$
For each $r_I < s \le \omega $, we have $\mathcal{O}(\theta- x) = s $, so $\mathcal{O}(f(x(y)),y) = \lambda s +a$.  

\noindent But for $s > \omega$, $\mathcal{O}(\theta- x) = \omega$ and $\mathcal{O}(f(\alpha (y), y)) = \lambda\omega + a$, so 
$$
\mathcal{O}(f(x(y),y)) \rightarrow \mathcal{O}(f(\alpha(y), y)) =\lambda \omega +a .
$$

\noindent Now, $\mathcal{O}(\theta-\alpha) = 0$ if and only if $\theta = \alpha$.  That is, if and only if $\theta$ is the left boundary of $T$.  In this case, there are an infinite number of edges and
$$
\mathcal{O}(f(x(y),y)) \rightarrow \mathcal{O}(f(\theta(y), y)) = +\infty
$$.

If $\theta$ is on or to the right of the right boundary $\beta$, we have a similar collection of order functions (except for the edge $[r_1, t_1]$ the order function is constant $\lambda t_1 + a$). However, to the right of this edge there are some number of edges as $x(y)$ gets closer to $\theta(y)$ just as in the previous case, and we have 
$$
\mathcal{O}(f(x(y),y)) \rightarrow \mathcal{O}(f(\beta(y), y))  .
$$

Since the order function on $T$ is continuous on $T$, including at the boundary arcs, it follows:

\begin{proposition} Let $f:( \R^2,\bf{0}) \rightarrow (\R, 0)$ be a subanalytic and continuous function. The order function of $f$ is continuous and piecewise affine on the entire order graph of $f$, taking values in $[0,+\infty]$.
\end{proposition}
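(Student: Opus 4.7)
The plan is to assemble the local analysis already carried out on a single piece $T$ of the decomposition $\mathcal{C}$ into a global statement on the entire order graph. First, I would fix the preparation decomposition of $\mathbb{R}^2$ at $\mathbf{0}$ provided by the subanalytic preparation theorem. Since $\mathcal{C}$ is finite, there are only finitely many pieces $T$, finitely many exponents $\lambda$, finitely many shift curves $\theta(y)$, and finitely many associated orders $A=\mathcal{O}(a(y))$. On each $T$, the computation carried out just before the proposition shows that an arc $x(y)\in T$ contributes to an edge of the order graph whose parameter $s_k$ is the order of the first coefficient in the Puiseux expansion of $x(y)$ at which $x(y)$ and $\theta(y)$ first disagree; along such an edge the order function equals $\lambda s_k + A$, which is clearly affine in $s_k$.

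Next I would verify continuity at the internal transitions within a single piece $T$. When one passes from a $k$-th order edge to a $(k{+}1)$-st order edge at the vertex corresponding to $s_k=\sigma_k$, the limiting value $\lambda\sigma_k+A$ from the lower edge coincides with $\lambda s_{k+1}+A$ evaluated at $s_{k+1}=\sigma_k$, so the order function is continuous at these vertices. At the extreme case where the arc collapses onto $\theta$ (possible only when $\theta$ coincides with a boundary of $T$), the parameter $s_k$ tends to $+\infty$ through a monotone increasing sequence, so the order function tends to $+\infty$; this accounts for the value $+\infty$ in the range $[0,+\infty]$ and is also the correct value $\mathcal{O}(f(\theta(y),y))$ by the preparation formula with $u(\theta(y),y)$ bounded away from $0$.

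Then I would glue across adjacent pieces of $\mathcal{C}$. Two adjacent pieces $T, T'$ share a boundary arc $\gamma$, and each produces its own edge in the order graph containing $\gamma$. Because $f$ is globally continuous, the order $\mathcal{O}(f(\gamma(y),y))$ is a single number which must be produced by both the $T$-formula and the $T'$-formula along their respective edges abutting $\gamma$; this forces the two affine pieces meeting at $\gamma$ to agree. Combined with the intra-piece continuity of the preceding paragraph, the full piecewise affine function is continuous on the whole graph.

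The main obstacle I expect is bookkeeping: the order graph at a single exceptional $\theta$ can have infinitely many edges (the ``$k$-th order edges'' as $k\to\infty$ when $\mathcal{O}(\theta-\alpha)=0$), so one needs to be careful that ``piecewise affine'' is interpreted as affine on each edge of the (possibly infinite, but locally finite at every finite vertex) tree, rather than as globally piecewise affine with finitely many pieces. Handling this requires verifying that the sequence of affine pieces $\lambda s_k+A$, for $s_k\to+\infty$, limits to $+\infty$ in a controlled way and that no spurious accumulation of edges can occur at a finite value of $s_k$; both facts follow from the finiteness of $\mathcal{C}$ together with the fact that each piece contributes only finitely many exceptional Puiseux coefficients at each level of the tree. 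Once this is established, the continuous piecewise affine character of the order function is immediate from the per-edge formula $\lambda s_k+A$ and the boundary-matching argument above.
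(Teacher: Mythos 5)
Your proposal follows the same route as the paper: apply the subanalytic preparation theorem, read off the per-edge formula $\lambda s_k + A$ from the factorization $f = (x-\theta(y))^\lambda a(y)\, u(x,y)$, check continuity at the internal vertices of each piece's tree, and handle the degenerate case where $\theta$ coincides with a boundary arc, in which the tree has infinitely many levels and the order tends to $+\infty$.

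One caution on the gluing step. You attribute continuity of the order function across adjacent pieces to the global continuity of $f$, but continuity of $f$ alone does not make the order of $f$ along arcs a continuous functional of the arc (consider $f(x,y) = x - y$: along $x = cy$ the order is $1$ for all $c \neq 1$, yet along $x = y + c y^2$ it jumps to $2$ or higher). What actually makes the two affine pieces agree at a shared boundary arc $\gamma$ is that the preparation factorization is controlled on the closure of each piece $T$: since $|u - 1| < \tfrac12$, the order $\mathcal{O}(f(x(y),y)) = \lambda\, \mathcal{O}(x-\theta) + A$ converges, as $x \to \gamma$ within $T$, to $\lambda\, \mathcal{O}(\gamma-\theta) + A = \mathcal{O}(f(\gamma(y),y))$, and likewise from the other side. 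This boundary-continuity of the per-piece order function is precisely what the paper's preceding computation establishes, and since you already invoke $u$ bounded away from $0$ to justify the $+\infty$ behaviour, this is a matter of making the right justification explicit rather than a genuine gap.
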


If any boundary arc is a $\theta$ function, then $f=0$ on this edge and the  order function at the vertex representing that edge is $+\infty$ by definition, and the order function is considered continuous there because it approaches  $+\infty$ as we approach that vertex on the graph.

For every closed interval $I$ of the ``order graph'', there is a zone $Z(I)$ consisting of all arcs represented by $I$, the width of $I$ equals the largest width of a $Z(r)$ of arcs representing $r\in I$.  Each $Z(r)$ has multiple representatives having distance apart $w(Z(r))$, so $I$ by definition is closed.

Now assume $F(x,y,z)$ is an analytic function on a neighborhood of $0$.  Let $f(x,y)$ have graph contained in $F^{-1}(0)$ and $F^{-1}(0) - \Gamma f$ has dimension less than or equal to 1.  Then $f$ is semianalytic, so subanalytic.  We assume $f$ is continuous and $\pi |\Gamma f \rightarrow \R^2$ is a homeomorphism.

$F_x$ and $F_z$ are analytic, so $F_x^* = F_x (x, y, f)$ and $F_z^* = F_z(x, y, f)$ are continuous subanalytic maps $U\subset \R^2 \rightarrow \R$.  Restricting to any wedge $W$: $ax\le y\le bx$, $a<b, \, y\ge 0$, and applying the preparation theorem to each of $F_x^*$ and $F_z^*$ yields 
$\mathcal{C}(F_x^*)$ and $\mathcal{C}(F_z^*)$.  

Let $\mathcal{C}(f_x)$ be a common refinement of 
$\mathcal{C}(F_x^*)$ and $\mathcal{C}(F_z^*)$.
 
The set $V = \{ F^*_z =0\}$ consists of finitely many arcs such that $\Gamma f$ has a vertical tangent, and these arcs will be among the boundary  arcs of  $\mathcal{C}(f_x)$. Since $F^*_z$ and  $F^*_x$ have no common zeros except at $(0,0)$, $f_x=\frac{-F^*_x}{F_z^*} =\infty$, and we say $\mathcal{O}(f_x)=-\infty$ in this case. ($\mathcal{O}(f_x)=+\infty$ on arcs satisfying $f_x=0$).

The preparation theorem doesn't directly apply to $f_x$ because it is not continuous at $V$; nevertheles we get an order tree:
if $x(y)$ is an arc in $W$, then the order $x(y)$ for $F_x^*$ is some $\lambda_1r + A_1$ and for $F_z^*$ is some $\lambda_2r + A_2$ and so for $f_x $ $$\mathcal{O}(f_x(x(y)y)) =(\lambda_1r+A_1)-(\lambda_2r + A_2) = (\lambda_1 -\lambda_2)r + A_1 - A_2.$$ 

The FL zones are given by $\mathcal{O}(f_x) \ge 0$ and the FI and FD zones are where $\mathcal{O}(f_x) <0$.

Applying the Proposition and making some obvious adjustments, we get  

\begin{theorem} The order function $\mathcal{O}(f_x)$ is continuous and piecewise affine over any wedge  $ay\le x\le by$, $a<b,\, y\ge 0$; similarly over  $by\le x\le ay$, $a<b,\, y \le 0$.  Similarly $\mathcal{O}(f_y)$ is continuous and piecewise affine over any wedge  $ax\le y\le bx$, $a<b,\, x\ge 0$ or over  $bx\le y\le ax$, $a<b,\, x \le 0$.  All of the flat zones in these cases are closed.    
\end{theorem}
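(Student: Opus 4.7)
The plan is to reduce the theorem to the Proposition established in Section 8, which guarantees that the order function of a continuous subanalytic germ is continuous and piecewise affine on its order graph. I treat only the wedge $ay\le x\le by$, $y\ge 0$ for $\mathcal{O}(f_x)$; the remaining wedges and the assertion for $\mathcal{O}(f_y)$ follow by verbatim symmetric arguments after swapping the roles of coordinates. The key observation is that while $f_x$ itself is not continuous (it can be $\pm\infty$ on arcs where $F_z^\ast=0$), it is the quotient $f_x=-F_x^\ast/F_z^\ast$ of two continuous subanalytic functions on the wedge, to which the Proposition applies directly.

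First I would apply the subanalytic preparation theorem to each of $F_x^\ast$ and $F_z^\ast$, producing cell decompositions $\mathcal{C}(F_x^\ast)$ and $\mathcal{C}(F_z^\ast)$, and then take a common refinement $\mathcal{C}(f_x)$. By the Proposition, both $\mathcal{O}(F_x^\ast)$ and $\mathcal{O}(F_z^\ast)$ are continuous, piecewise affine functions on the (common) order graph, with values in $[0,+\infty]$; the affine pieces are exactly those constructed from the preparation expressions $(x-\theta(y))^\lambda a(y)u(x,y)$ as in the preceding discussion, and the edges on higher levels of the tree correspond exactly to arcs approaching the various $\theta$-curves that appear as boundary arcs of cells.

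Next, I would invoke the hypothesis that $\pi|\Gamma f$ is a homeomorphism near $0$ together with $\dim(F^{-1}(0)-\Gamma f)\le 1$ to conclude that $F_x^\ast$ and $F_z^\ast$ have no common zero arc away from $0$: a common zero arc would produce either a non-isolated vertical-tangent arc in $\Gamma f$ (breaking the homeomorphism/graph structure) or an extra component of $F^{-1}(0)$ of dimension $\ge 2$. Therefore on every arc $\gamma$ at most one of $\mathcal{O}(F_x^\ast|_\gamma)$ and $\mathcal{O}(F_z^\ast|_\gamma)$ is $+\infty$, and I can define
\[
\mathcal{O}(f_x|_\gamma) \;=\; \mathcal{O}(F_x^\ast|_\gamma)\,-\,\mathcal{O}(F_z^\ast|_\gamma)\;\in\;[-\infty,+\infty],
\]
consistent with $f_x=-F_x^\ast/F_z^\ast$. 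The difference of two continuous piecewise affine functions on the order graph is again continuous and piecewise affine after passing to a common subdivision that places vertices at the points where either summand changes slope or reaches $\pm\infty$. This yields the first conclusion of the theorem.

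For the closedness of flat zones, the maximal $FL$ regions are by definition the connected components of $\{\mathcal{O}(f_x)\ge 0\}$, which, thanks to continuity of $\mathcal{O}(f_x)$ on the order tree, form a closed subgraph. The earlier discussion shows that each point of the tree represents a family of arcs sharing a fixed Puiseux prefix and parametrized by the coefficient of a further term of fixed order, and that the width of the associated zone is the reciprocal of that order; thus a closed subtree realizes its supremum width at some point, where at least two arcs with distinct next-order coefficients witness the supremum distance. Hence every maximal $FL$ zone is closed in the sense of Section 5. The main obstacle in making this rigorous is the bookkeeping at vertices where $\theta(y)$ of one of $F_x^\ast$, $F_z^\ast$ coincides with a boundary arc of a cell: there the order jumps to $+\infty$ and new branches of the tree sprout, and one must verify that the subtraction remains continuous through these $+\infty$ vertices and that the hypothesis on $\pi|\Gamma f$ really does preclude simultaneous blow-up of both orders on the same arc.
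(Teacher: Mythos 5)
Your proposal follows essentially the same route as the paper: apply the subanalytic preparation theorem separately to $F_x^\ast$ and $F_z^\ast$, pass to a common refinement, express $\mathcal{O}(f_x)$ as the difference of the two continuous, piecewise affine order functions on the order tree, and deduce closedness of the flat zones from that continuity. The one place you diverge slightly is in your attempted justification that $F_x^\ast$ and $F_z^\ast$ share no common zero arc (the paper simply asserts this from its standing hypotheses, and your appeal to ``non-isolated vertical-tangent arcs breaking the homeomorphism'' does not quite hold up, since vertical-tangent arcs are explicitly permitted), but since the paper itself does not supply a detailed argument there, this does not constitute a gap relative to the paper's own proof.
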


\section{Extending Theorem 6.6}

%change, here and what follows 
It is natural to ask whether Theorem 6.6 extends to the case when
$C_{\bf0}(X) = C \ne \R^2$ has an isolated singularity and is equidimensional of dimension 2 in 
$\R^3$.  In this case, let $N$ be a unit normal vector field on $C$. Let $f: C \rightarrow \R$ and $X = \ {\rm im}(f\cdot N),
\ \pi:X\rightarrow C$ be the inverse of $f\cdot N$.  It may happen that $\pi$ is singular in which case $T_xX$ 
is ``vertical'' containing the vector $N$.  The proof of the analogue of Theorem 6.6 in this setting should
be similar to that in this paper, but with the role of the variable $x$ being replaced by the angular variable $s$ in the
appropriate component of the tangent cone. In the neighborhood of any non-exceptional line in $C$, the geometry should be just like that in the case $C = \R^2$.   

It is also natural to ask whether our results extend to give an ambient bilipschitz classification of semialgebraic surfaces $X$ which are graphs $X = \Gamma f \subset \R^3$ of a semialgebraic function $f:U\rightarrow \R$ with isolated singularity at the origin and which have tangent cone $C = CX$ a plane. We believe that such an extension should be possible using the techniques employed here.

\noindent


\begin{thebibliography}{BFN1}

\bibitem{B} L. Birbrair, {\it Local bi-lipschitz classification of 2-dimensional 
semialgebraic sets.}  Houston J. Math., {\bf 25} (1999) 453-472.

\bibitem{BM} L. Birbrair, T. Mostowski, {\it Normal embeddings of semialgebraic sets.}  Michigan Math. J., {\bf 47} (2000) 125-132.

\bibitem{BFGG} L. Birbrair, A. Fernandes, V. Grandjean, A. Gabrielov, {\it Lipschitz contact equivalence of function germs in $\R^2$}, Ann.Sc. Super. Pisa Cl. Sci. (5), Vol. XVII (2017), 81-92. 

%\bibitem{BFN3} L. Birbrair, A. Fernandes, W. Neumann, {\it Separating sets, metric tangent cone and applications for complex algebraic germs.} Selecta Math. (N.S.) 16 (2010), no. 3, 377-391.
	
\bibitem{BG1}L. Birbrair, A. Gabrielov, {\it Ambient Lipschitz Equivalence of Real Surface Singularities}, International Mathematics Research Notices, Volume 2019, Issue 20, October 2019, Pages 6347–6361.

\bibitem{BG2}L. Birbrair, {\it Lipschitz geometry of definable surface germs I}, Lecture notes CIRM Workshop: Lipschitz Geometry: New Methods and Applications, July 5-9, 2021.

\bibitem{BG3}A. Gabrielov, {\it Lipschitz geometry of definable surface germs II}, Lecture notes CIRM Workshop: Lipschitz Geometry: New Methods and Applications, July 5-9. 2021.

\bibitem{BG4}L. Birbrair, A. Gabrielov, {\it Lipschitz geometry of pairs of normally embedded Hölder
	triangles}, European Journal of Mathematics (2022) 8:766–791.
	
\bibitem{C}M. Coste, {\it An Introduction to Semialgebraic Geometry}. Dip. Mat. Univ. Pisa, Dottorato di Ricerca in Matematica, Istituti Editoriali e Poligrafici Internazionali, Pisa (2000).

\bibitem{GS}A. Gabrielov, E. Souza,  {\it Lipschitz geometry and combinatorics of abnormal surface germs}, Selecta
Math. (N.S.) 28(1), 1 (2022).
%\bibitem{BNP} L. Birbrair, W. Neumann, A. Pichon, {\it The thick-thin decomposition and the bi-Lipschitz classification of normal surface singularities.} Acta Mathematica, {\bf 212} (2014) 199-256.

\bibitem{VS} L. Van den Dries, P. Speissegger. {\it O-minimal preparation theorems.} In "Model Theory and Applications", L. Bélair, Z. Chatzidakis, P. D'Aquino, D. Marker, M. Otero, F. Point and A. Wilkie (eds.), Quad Mat., Vol 11, Aracne, Roma, 2002, 87-116.  

\bibitem{FW} M. Ferraroti, L. Wilson, {\it Remarks on the generalized Hestenes's lemma.}  Rocky Mountain J. Math. {\bf 38} no. 2 (2008) 461-469.


%\bibitem{LH} J.-P. Henry, L\^e D\~ung Tr\'ang,  {\it Limites d'espaces tangents\/},
%in {\it S\'eminaire Norguet}, Lecture Notes in Math.  {\bf 482}, Springer, 1975.

%\bibitem{L} L\^e D\~ung Tr\'ang, {\it Limites d'espaces tangents sur les surfaces\/}, 
%Nova Acta
%Leopoldina, N.F.  {\bf 52}, Nr. 240 (1981) 119-137. 

%change commented out the two papers above

\bibitem{LT} L\^e D\~ung Tr\'ang, B. Teissier, {\it Limites 
d'espaces tangents en 
g\'eometrie analytique}, Comment. Math. Helv.  {\bf 63} (1988) 
540-578.

%\bibitem{Mi} J. Milnor, {\it Singular Points of Complex Hypersurfaces}, Princeton: Princeton University Press, 1969.. 

\bibitem{OW} D. O'Shea,  L. Wilson.  {\it Limits of tangent spaces to real surfaces}.  Amer. J. Math, {\bf 126} (2004) 951-980. 

\bibitem{OW1} D. O'Shea,  L. Wilson. {\it Exceptional rays and bilipschitz geometry of real surface singularities}. Topology and its Applications {\bf 234} (2018) 359-374.

\bibitem{P} A. Parusinski. {\it On the preparation theorem for subanalytic functions}.  In D. Siersma et al (eds.) "New Developments in Singularity Theory," Kluwer Academic Publishers (2001) 193-215.

\bibitem{S} J.E. Sampaio, {\it Bi-lipschitz homeomorphic subanalytic sets have bi-lipschitz homeomorphic tangent cones}, Selecta Math. {\bf 22} (2016) pp. 53-59. 

\bibitem{T} J.C. Tougeron, {\it Ideaux des Fonctions Differentiables}, Berlin: Springer, 1972.

\bibitem{V} G. Valette,  The link of the germ of a semi-algebraic metric space. Proc. Amer. Math. Soc., 135
(2007), no. 10, 3083–3090.

\bibitem{W} H. Whitney, {\it Tangents to an analytic variety}, Ann. of Math. {\bf 81} (1965) 496-549.


\end{thebibliography}
\end{document}